\documentclass[11pt]{amsart}

\usepackage{pdfpages}
\usepackage{amssymb}
\usepackage{amsmath}
\usepackage{amsthm}
\usepackage{thm-restate}
\usepackage{graphicx}
\usepackage{float}
\usepackage{tikz}
\graphicspath{ {./pics/} }

\newtheorem{theorem}{Theorem}

\newtheorem{lemma}[theorem]{Lemma}
\newtheorem{definition}[theorem]{Definition}
\newtheorem{proposition}[theorem]{Proposition}
\newtheorem*{conjecture}{Conjecture}
\newtheorem*{theorem*}{Theorem}
\newtheorem*{remark}{Remark}

\numberwithin{theorem}{section} % numbers theorems using the subsection number
\numberwithin{corollary}{section}
\numberwithin{lemma}{section}
\numberwithin{proposition}{section}
\numberwithin{definition}{section}

\newtheorem{innercustomthm}{Theorem}
\newenvironment{customthm}[1]
	{\renewcommand\theinnercustomthm{#1}\innercustomthm}
	{\endinnercustomthm}

\begin{document}
\title{Maximizing Core Entropy}

\author{Jacob Kewarth}
\address{University of Toronto, 40 St. George Street, M5S 2E4, Toronto, Canada}
\email{jacob.kewarth@mail.utoronto.ca}

\begin{abstract}
Here we study the core entropy function $h: PM(d) \rightarrow \mathbb{R}$ defined on the space of degree $d$ primitive majors, a combinatorial model for the space of degree $d$ polynomials. In particular, we introduce techniques to classify the global maximum for each $d$ and the maxima along certain strata. We show that there are $d-1$ global maxima over all of $PM(d)$ and that the maxima on the unicritical stratum is a Cantor set.
\end{abstract}

\maketitle

%\tableofcontents

\section{Introduction}

For a postcritically finite polynomial $f$, the core entropy of $f$ is the topological entropy of $f$ restricted to its Hubbard tree. This dynamical invariant was studied by Thurston (see \cite{Th+}) who raised many questions and conjectures which influenced subsequence research. Core entropy exhibits interesting interactions both with the dynamics of $f$ itself and with the parameter space $f$ lives in. With respect to the Julia set $J_f$ of $f$, the core entropy of $f$ is a constant multiple of the Hausdorff dimension of the set of biaccessible angles. In the space of postcritically finite polynomials core entropy varies continuously (see \cite {Ti1} or \cite{DS} for quadratics and \cite{GT} for general degree) and in the Mandelbrot set it is monotonic along veins (see \cite{Li} and \cite{Ti1}). This last property has led to considerable interest in the maxima and local maxima of core entropy. For quadratic polynomials, the local maxima were conjectured to occur at dyadic angles (with the unique global maxima at $\theta = 1/2$). Partial progress on this conjecture was made in \cite{Ju} and a full proof was given in \cite{DS}. In \cite{GT} Gao and Tiozzo asked for a description of the global maxima for core entropy in higher degrees and the goal of this paper is to develop techniques to answer their question.

A common approach in studying core entropy is to associate to each postcritically finite polynomial a rational primitive major (see \cite{Po}). Recall that a primitive major of degree $d$ is a set of ideal polygons (called leaves) $M = \{ L_1, L_2, ..., L_r \}$ on the closed unit disc such that:
\begin{enumerate}
\item The elements of $M$ are pairwise disjoint,
\item If we parameterize the unit circle $\partial \mathbb{D}$ by $[0, 1)$ then for each leaf $L_j$, the endpoints $\partial \mathbb{D} \cap L_j$ are identified under $T(z) = dz \mod 1$,
\item $\sum_{j=1}^r (|\partial \mathbb{D} \cap L_j | -1) = d-1$.
\end{enumerate}

Then $PM(d)$ is the set of all primitive majors of degree $d$. We say that a $M$ is a rational primitive major if the endpoints of all of its leaves are rational numbers and denote the space of all such majors as $RPM(d)$. Note that the endpoints of a leaf being rational implies they are (eventually) periodic under $T(z) = dz \mod 1$.

The leaves of the rational primitive major act as a combinatorial model for the critical points of the polynomial, and the iterations of the endpoints of the leaves by $T(z) = dz \mod 1$ model the iterations of the critical points. Thus we can instead define core entropy as a function on $RPM(d)$. In fact we can continuously extend core entropy to a map defined on all $PM(d)$.

Then our first main result of the paper is:

\begin{customthm}{A}
For any $d \geqslant 2$ the set of maxima of core entropy over $PM(d)$ contains $d-1$ elements. When $d$ is even all these maxima are identified under $S(z) = z + \frac{1}{d-1} \mod 1$. When $d$ is odd there are two equivalence classes under $S$.
\end{customthm}

\begin{figure}[!h]
\centering
\includegraphics[scale=0.4]{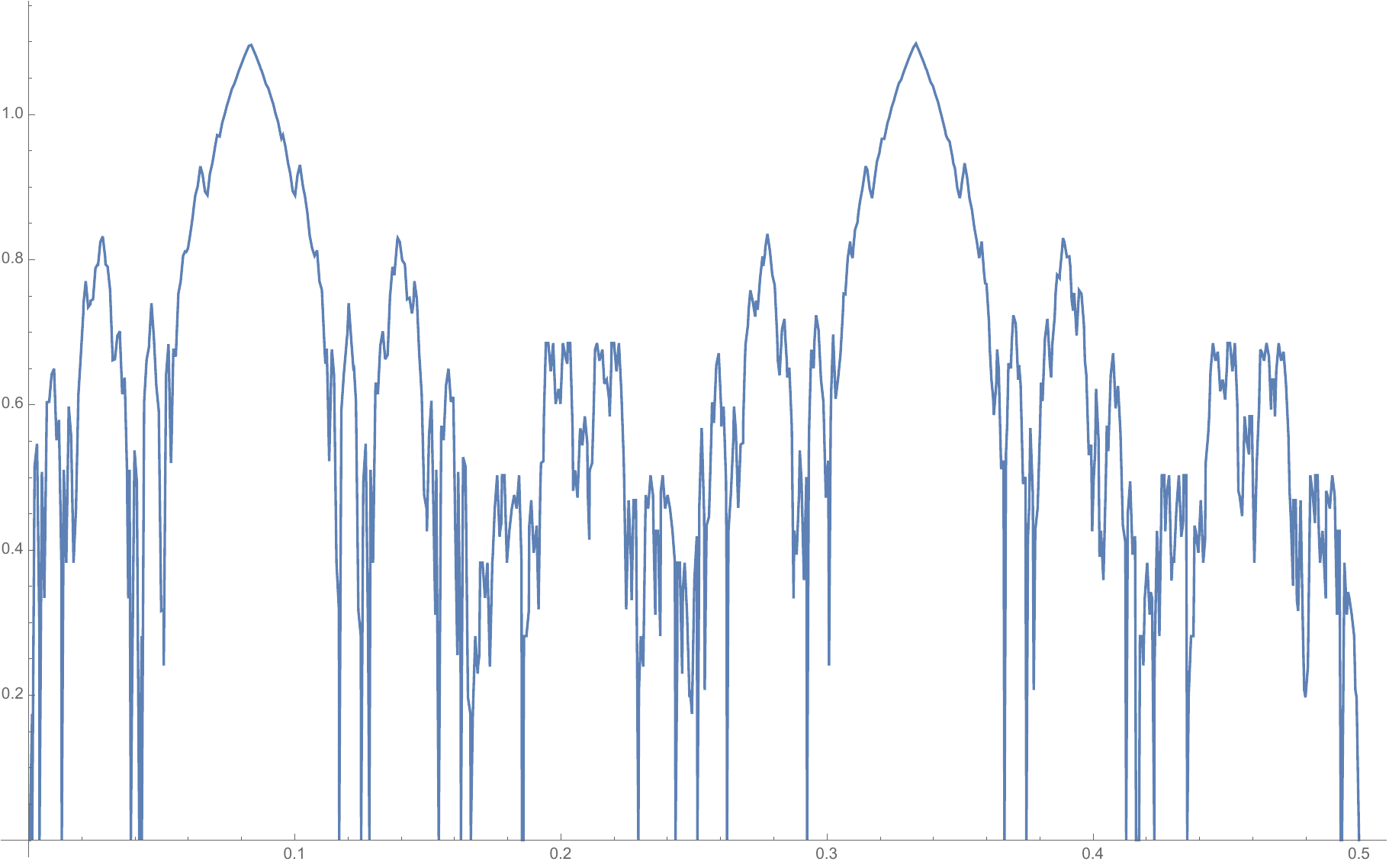}
\caption{A slice of the graph of core entropy for $PM(3)$ highlighting the 2 global maxima. This figure is from [GT].}
\end{figure}

For a fixed $d$, elements of $PM(d)$ can vary wildly. For example, in $PM(3)$ an element could consist of either two ideal lines or a single ideal triangle. Thus it is natural to subdivide $PM(d)$ into strata.  If we have natural numbers $s_j$ such that $\sum (s_j - 1) = d-1$ then we can define the stratum $\Pi(s_1, ..., s_r)$ to be the subset of $PM(d)$ with leaves of size $s_1, ..., s_r$. In particular, for each $d$ we have the unicritical stratum $\Pi(d)$ consisting of a single ideal d-gon. 

Our second main result states:

\begin{customthm}{B}
For any $d \geqslant 3$ the set of maxima of core entropy over $\Pi(d)$ is a Cantor set.
\end{customthm}

\begin{figure}[!h]
\centering
\includegraphics[scale=0.4]{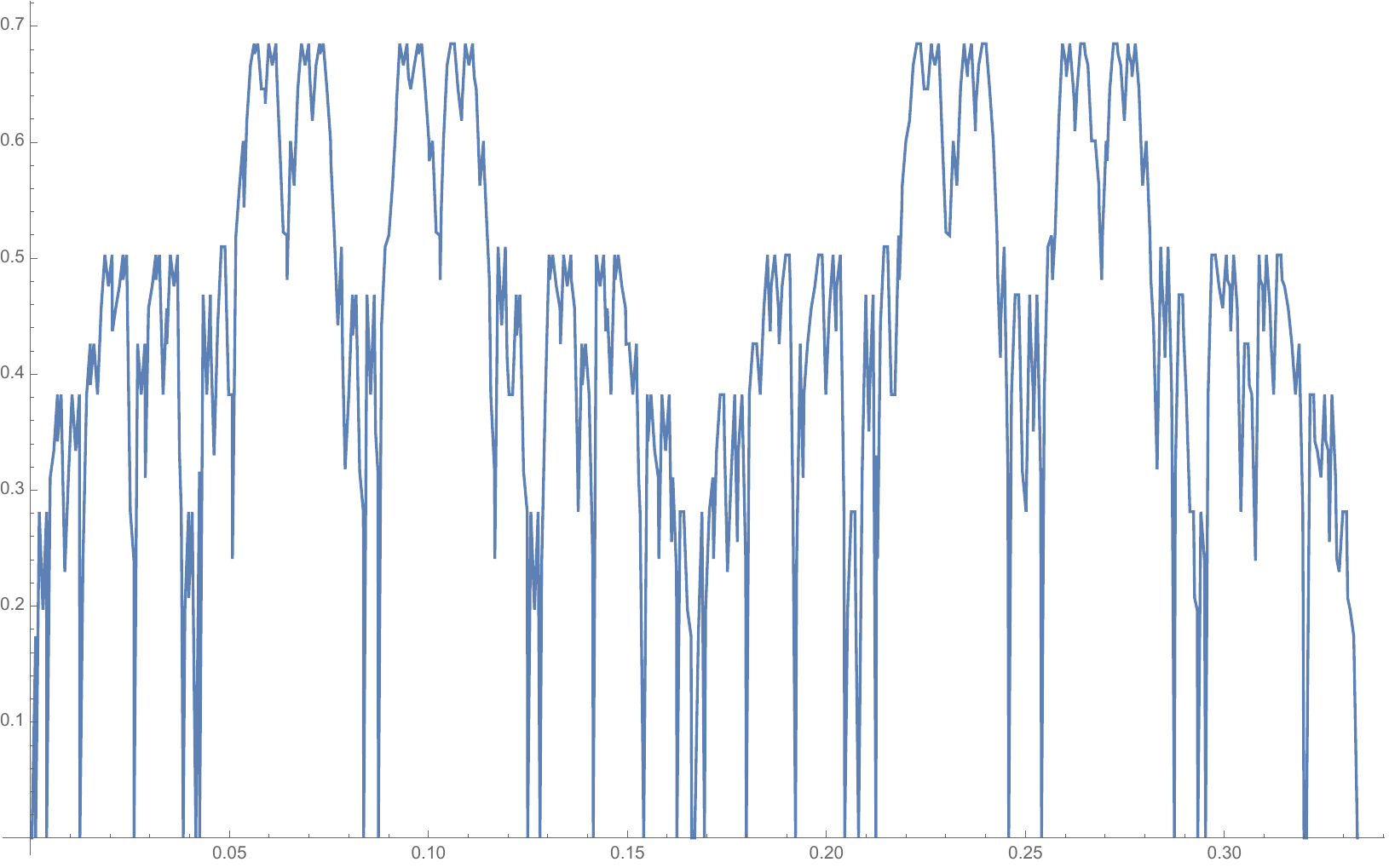}
\caption{The graph of core entropy on the cubic unicritical stratum $\Pi(3)$. This figure is from [GT].}
\end{figure}

In section $2$ we review relevant preliminaries. In particular we discuss two algorithms for computing/defining the core entropy function on $PM(d)$.

In section $3$ we introduce the notion of a primitive major being totally separated, a simple combinatorial condition which we show is equivalent to maximizing entropy.

In sections $4$ and $5$ we describe the maxima of core entropy over all of $PM(d)$ and over the unicritical stratum.

Finally in section $6$ we briefly discuss how to generalize these ideas to arbitrary stratum of $PM(d)$.

\section{Preliminaries}

\begin{figure}
\begin{tikzpicture}[scale=3]

  % 1. Boundary Circle
  \draw[thick, gray!40] (0,0) circle (1);

  % 2. Main Lamination Leaf Vertices (Parameterized [0,1) -> mapped to 360 degrees)
  \path (0*360:1)         coordinate[label={0*360:$0$}]           (L1_A)
        (1/5*360:1)       coordinate[label={1/5*360:$\frac{1}{5}$}] (L1_B)
        
        (1/4*360:1)       coordinate[label={1/4*360:$\frac{1}{4}$}] (T_A)
        (9/20*360:1)      coordinate[label={9/20*360:$\frac{9}{20}$}] (T_B)
        (17/20*360:1)     coordinate[label={17/20*360:$\frac{17}{20}$}] (T_C)
        
        (1/2*360:1)       coordinate[label={1/2*360:$\frac{1}{2}$}] (L2_A)
        (7/10*360:1)      coordinate[label={7/10*360:$\frac{7}{10}$}] (L2_B);

  % 3. Isolated Marked Target Points
  \path (1/10*360:1)      coordinate[label={1/10*360:\textbf{\large $x$}}] (Pt_X)
        (8/20*360:1)      coordinate[label={8/20*360:\textbf{\large $y$}}] (Pt_Y)
        (2/3*360:1)       coordinate[label={2/3*360:\textbf{\large $z$}}] (Pt_Z);

  % 4. Draw Leaves (Ideal Geodesics) with Exact Hyperbolic Bends
  
  % Ideal Line Leaf 1: (0 to 1/5)
  \draw[thick, blue] (L1_A) to [bend left=54] node[pos=0.5, auto, black, scale=0.9] {$L_1$} (L1_B);
  
  % Ideal Triangle Leaf 2: (1/4 to 9/20 to 17/20 back to 1/4)
  \draw[thick, blue] (T_A) to [bend left=54] (T_B);
  \draw[thick, blue] (T_B) to [bend left=18] (T_C);
  \draw[thick, blue] (T_C) to [bend left=18] (T_A);
  
  % Centered label for the entire Triangle Leaf L2
  % Puts the label at the gravity center of the 3 boundary points
  \node[black, scale=0.9] at (barycentric cs:T_A=1,T_B=1,T_C=1) {$L_2$};
  
  % Ideal Line Leaf 3: (1/2 to 7/10)
  \draw[thick, blue] (L2_A) to [bend left=54] node[pos=0.5, auto, black, scale=0.9] {$L_3$} (L2_B);

  % 5. Render Red Dots for Leaf Nodes
  \fill[red] (L1_A) circle (0.012) (L1_B) circle (0.012)
              (T_A) circle (0.012) (T_B) circle (0.012) (T_C) circle (0.012)
              (L2_A) circle (0.012) (L2_B) circle (0.012);

  % 6. Render Distinct Dark Dots for Target Points x, y, z
  \fill[darkgray] (Pt_X) circle (0.015)
                  (Pt_Y) circle (0.015)
                  (Pt_Z) circle (0.015);

\end{tikzpicture}
\caption{A degree $5$ primitive major in the $\Pi(1, 1, 2)$ stratum. Since all the endpoints of the leaves are rational numbers this also lives in $RPM(5)$. The pair $(x, y)$ has separation vector $(L_1, L_2)$, the pair $(x, z)$ has separation vector $(L_1, L_2, L_3)$, and the pair $(z, y)$ has separation vector $(L_3, L_2)$.}
\end{figure}

Given a primitive major $M = \{ L_1, ..., L_r \}$ and two points $x, y \in S^1$, we say that a leaf $L_{j}$ separates $x$ and $y$ if they lie in different connected components of $S^1 \setminus \partial L_j$. If $(x, y)$ is an ordered pair of points on $S^1$, we say that $( L_{a_1}, ..., L_{a_k} )$ is the separation vectors of $(x, y)$ if the leaf from $x$ to $y$ passes through $( L_{a_1}, L_{a_2}, ..., L_{a_k})$ exactly in this order. If none of the leaves of $M$ separates $x$ and $y$ then we say their separation vector is $\emptyset$.

\subsection{Thurston's Algorithm}

Fix some $d \geqslant 2$ and let $T: [0, 1) \rightarrow [0, 1)$, $T(x) = dx\mod 1$.

Given an element $M = \{ L_1, ..., L_r \}$ in $PM(d)$, we can consider:
\[
P = \{ T^i(\partial \mathbb{D} \cap L_j )\mid j = 1, 2, ..., r \}.
\]
Define a set $V$ to contain only $\{ x, x \}$ if $P = \{ x \}$ is a singleton set, or otherwise $V = \{ \{ x, y \} \mid x, y \in P, x \neq y \}$. Then we can construct the Thurston graph of $M$, denoted by $G(M)$, to have vertex set $V$. For any element $v = \{ x, y \} \in V$, if $x$ and $y$ belong to a common leaf $L_j$ then there are no edges from $v$. If $\{ x, y \}$ are separated by no leaves in $M$ then there is an edge from $v$ to $w = \{ T(x), T(y) \}$. Otherwise, if the separation vector of $(x, y)$ is $(L_{a_1}, ..., L_{a_s})$ then we have edges to:
\begin{align*}
&\{ T(x), T(L_{a_1}) \}, \\
&\{ T(L_{a_1}), T(L_{a_2}) \}, \\
&..., \\
&\{ T(L_{a_{s-1}}), T(L_{a_s}) \}, \\
&\{ T(L_{a_s}), T(y) \}.
\end{align*}

For a general primitive major $M$, the graph $G(M)$ can have infinitely many vertices. In the special case of a rational primitive major $m \in RPM(d)$, the periodicity of the endpoints of the leaves makes the vertex set finite. Thus we can consider the finite adjacency matrix $A(m)$. By constructions the entries of $A(m)$ are natural numbers $\leqslant d$ (usually just $0$ and $1$) and so the leading eigenvalue $\rho(m)$ of $A(m)$ is a positive real number. Then we can define core entropy $h: RPM(d) \rightarrow \mathbb{R}$ by:
\[
h(m) = \log \rho(A(m)).
\]
We know that $h(m)$ continuously extends to a function on $PM(d)$ by approximating primitive majors by rational primitive majors (see \cite{Ti2} and \cite{GT}), thus giving us a core entropy function $h: PM(d) \rightarrow \mathbb{R}$.

\subsection{Tiozzo's Algorithm}

Here we present an alternative definition of core entropy.

Starting with $M = \{ L_1, ..., L_r \} \in PM(d)$ we define: 
\[
W = \{ \{ y_k(i), y_l(j) \} \mid 1 \leqslant k, l \leqslant r \}.
\]

Here the elements $y_k(i)$ are arbitrary symbols which informally represent the $i^{th}$ iterate of the $k^{th}$ leaf of $M$. The important distinction is that we take the $y_k(i)$ to be pairwise distinct even if the points on $S^1$ they represent are the same. In particular, we can define $\pi(y_k(i))$ to be the point on $S^1$ obtained by iterating the $k^{th}$ leaf $i$ times.

We define the Tiozzo graph $\Gamma(M)$ to have vertex set $W$. For a vertex $v = \{ y_k(i), y_l(j) \}$ we assign edges as follows. If $\pi(y_k(i))$ and $\pi(y_l(j))$ are not separated by any leaf of $M$ then there is one edge to $\{ y_k(i+1), y_l(j+1) \}$.

Otherwise, if the separation vector of the ordered pair $(\pi(y_k(i)), \pi(y_l(j))$ is $(L_{a_1}, ..., L_{a_s})$ then there are edges to:
\begin{align*}
&\{ y_{k}(i+1), y_{a_1}(1) \}, \\
&\{y_{a_1}(1), y_{a_2}(1) \}, \\
&\{y_{a_2}(1), y_{a_3}(1) \}, \\
&..., \\
&\{y_{a_s}(1), y_{l}(k+1)\}.
\end{align*}

Note that the graph $\Gamma(m)$ is always infinite, even if $m \in RPM(d)$.

Given such a graph $\Gamma$, we can denote the number of closed paths of length $n$ by $C(\Gamma, n)$. Then we define the growth rate of $\Gamma$ as:
\[
r(\Gamma) = \limsup_{n \rightarrow \infty} C(\Gamma, n)^{\frac{1}{n}}.
\]

Then according to [GT] we have:

\begin{proposition}
For any $M \in PM(d)$:
\[
h(M) = \log(r(\Gamma(M))).
\]
\end{proposition}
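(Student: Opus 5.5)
The identity $h(M)=\log r(\Gamma(M))$ should first be established for rational majors $m\in RPM(d)$. There $h(m)=\log\rho(A(m))$ is defined through the finite Thurston graph $G(m)$, and the main work is comparing the finite graph $G(m)$ with the infinite graph $\Gamma(m)$. The result then extends to all of $PM(d)$ by continuity of both sides.

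For $m$ rational, the natural comparison map is the projection
\[
\Pi:\{y_k(i),y_l(j)\}\mapsto\{\pi(y_k(i)),\pi(y_l(j))\}.
\]
It sends vertices of $\Gamma(m)$ to vertices of $G(m)$, or to degenerate pairs $\{x,x\}$ or pairs lying on a common leaf, which are sinks. By construction it carries the edges out of a vertex of $\Gamma(m)$ bijectively onto the edges out of its image. The reason is that the separation vector depends only on the points $\pi(y)$. Moreover, $T(\pi(y_k(i)))=\pi(y_k(i+1))$ and $\pi(y_a(1))=T(L_a)$. So $\Pi$ is a graph covering in the sense of out-edges. Consequently:
\begin{enumerate}
\item Every closed path in $\Gamma(m)$ projects to a closed path in $G(m)$ of the same length. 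Each closed path of $G(m)$ through a given vertex has at most boundedly many lifts, because a lift is determined by its starting vertex. Hence $C(\Gamma,n)\le C_0\cdot\#\{\text{lifts of the start vertex}\}\cdot\operatorname{tr}$-type counts.
\item The problem is that the fibres of $\Pi$ are infinite. To handle this, I would note that every edge of $\Gamma$ leaving a separated pair lands in vertices containing some $y_a(1)$, so index information is reset.
\end{enumerate}
Restricting to the strongly connected part of $\Gamma(m)$ that carries the growth, I would show the following. For each $n$, closed paths of length $n$ that pass through a vertex involving some $y_a(1)$ correspond injectively to closed paths in $G(m)$ based at vertices $\{T(L_a),\cdot\}$, up to a factor polynomial in $n$. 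Closed paths that never separate just shift indices, $i\mapsto i+1$ on both coordinates, so they cannot close up and contribute nothing. This would give
\[
C(\Gamma(m),n)\le \mathrm{poly}(n)\,\operatorname{tr}A(m)^n+\ldots,
\]
and conversely every closed path in $G(m)$ through a vertex with a separation step lifts to a closed path in $\Gamma(m)$ up to bounded reindexing. Taking $n$-th roots and $\limsup$, with the spectral radius realized on an irreducible component that necessarily involves separations, gives $r(\Gamma(m))=\rho(A(m))$.

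To extend to all of $PM(d)$, I would show that $M\mapsto r(\Gamma(M))$ is continuous. Truncating $\Gamma(M)$ to indices $\le N$ gives finite graphs whose growth rates converge to $r(\Gamma(M))$ as $N\to\infty$, by a standard exhaustion argument for countable graphs with bounded out-degree. For fixed $N$, the truncated graph depends only on finitely many separation relations among iterates. These are locally constant in $M$ away from coincidences, and the coincidences are handled by one-sided approximation. Combined with density of $RPM(d)$ and the continuity of $h$ from \cite{Ti2,GT}, this yields the identity everywhere.

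The main obstacle is the upper bound in the rational case: controlling the infinite fibres of $\Pi$ so that the path counts in $\Gamma(m)$ do not exceed those in $G(m)$ by more than subexponential factors. The continuity step on the non-rational side is also delicate. If it resists, I would fall back on citing \cite{GT} for this proposition, as the paper does.
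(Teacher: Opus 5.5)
The paper does not prove this proposition; it quotes it from \cite{GT}, so your fallback is exactly what the paper does. As an independent proof, however, your route has a genuine gap in the step that carries the content for non-rational $M$. Local constancy of the truncations $\Gamma_N(\cdot)$ near $M$, together with $r(\Gamma_N(M))\uparrow r(\Gamma(M))$, only makes $M\mapsto r(\Gamma(M))$ \emph{lower} semicontinuous, as a supremum of locally constant functions. Combined with the rational identity and continuity of $h$, this yields only $\log r(\Gamma(M))\le h(M)$.

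The reverse inequality needs $\limsup_{m\to M} r(\Gamma(m))\le r(\Gamma(M))$. That is, you need a bound, uniform over rational $m$ near $M$, on closed paths of $\Gamma(m)$ through vertices of index $>N$, whose separation pattern is not controlled by that of $M$. Nothing in your sketch supplies this uniformity, and it is the substance of \cite{GT}, following Tiozzo \cite{Ti2} via spectral determinants. The structural fact behind it is the following. A symbol reaches index $N$ only by being carried for $N-1$ consecutive steps, and carrying a prescribed symbol leaves exactly one admissible edge at each separated vertex, so high-index excursions are nearly forced.

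Similarly, when some $T^i(L_k)$ lands on a leaf of $M$, the graphs of nearby majors genuinely differ from $\Gamma(M)$, and from one another on different sides. There is then no local constancy to appeal to, and ``one-sided approximation'' is not yet an argument. Note also that the continuity of $h$ you import is itself established in \cite{Ti2,GT} through exactly this growth-rate analysis, so leaning on it here is close to circular.

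In the rational case you have misplaced the difficulty. The upper bound is the easy direction. On a closed path of length $n$ in $\Gamma(m)$ every index is at most $2n$: otherwise, tracing a symbol $y_k(i)$ back twice around the loop would force the vertex to contain $y_k(i)$, $y_k(i-n)$ and $y_k(i-2n)$. So projection is $O(n^2)$-to-one on such paths, and $C(\Gamma(m),n)\le O(n^2)\,\mathrm{tr}A(m)^n$.

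The lower bound is where care is needed, and your claim that every closed path of $G(m)$ with a separation step lifts ``up to bounded reindexing'' is false. Suppose a point $x$ with $T^n(x)=x$ is carried through every step of the loop, meaning that at each separated vertex the loop uses the end edge containing it. Then the lift returns with that coordinate's index raised by $n$ and never closes, whatever the starting lift.

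The correct statement is that only loops carrying one of their points through all $n$ steps can fail to lift. Every other loop lifts by labelling each point by the leaf and time of its last creation. Since carrying a prescribed point determines the edge at every step, there are at most $2|V(G(m))|$ such based loops of each length. Hence $C(\Gamma(m),n)\ge \mathrm{tr}A(m)^n-2|V(G(m))|$, which suffices when $\rho(A(m))>1$.

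When $\rho(A(m))=1$ with only non-separating cycles (e.g.\ $d=2$, $m=\{\{1/6,2/3\}\}$), $\Gamma(m)$ has no closed paths at all. Your claim that the top component ``necessarily involves separations'' therefore fails there, and the identity needs a convention in that case.
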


\subsection{Symmetry}

Fix some $d \geqslant 2$. Then it is a trivial but useful fact that the map $T(x) = dx \mod 1$ commutes with the map $S(x) = x + \frac{1}{d-1} \mod 1$. This map $S$ induces a map on the space $PM(d)$, which by abuse of notation we also denote by $S$, by applying $S$ to the endpoints of each leaf of a given primitive major. This map is clearly a homeomorphism and, more interestingly, is entropy preserving.

\begin{lemma}
\label{symlemma}
For any $d \geqslant 2$ and any $M \in PM(d)$, $h(M) = h(S(M))$. 
\end{lemma}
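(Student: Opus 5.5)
We first check that $S(M)$ is again in $PM(d)$, then reduce to rational majors and show $S$ induces an isomorphism of Thurston graphs; continuity of $h$ and density of $RPM(d)$ then give the general case.

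\textbf{$S(M)$ is a primitive major.} The map $S$ is a rotation of the circle, so it extends to an isometry of the disc sending ideal polygons to ideal polygons. It therefore preserves disjointness of the leaves and the number of endpoints of each leaf, which gives conditions (1) and (3). For condition (2), if $x,y$ are endpoints of one leaf with $T(x)=T(y)$, then $T(S(x)) = S(T(x)) = S(T(y)) = T(S(y))$, using $dx + \frac{d}{d-1} = dx + 1 + \frac{1}{d-1}$. So $S(M)\in PM(d)$. Also $S$ maps rationals to rationals, so it preserves $RPM(d)$, and it is continuous on $PM(d)$ (with inverse the rotation by $-\frac{1}{d-1}$).

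\textbf{Rational case.} Let $m\in RPM(d)$ and write $m'=S(m)$. Since $T\circ S=S\circ T$, the postcritical set satisfies $P(m') = S(P(m))$. So $S$ induces a bijection $\{x,y\}\mapsto\{S(x),S(y)\}$ between the vertex sets of $G(m)$ and $G(m')$; the singleton case $P=\{x\}$ is handled the same way. We check that this bijection preserves edges, with multiplicity, as follows.
\begin{enumerate}
\item Two points $x,y$ lie on a common leaf $L_j$ of $m$ if and only if $S(x),S(y)$ lie on the common leaf $S(L_j)$ of $m'$.
\item Because a rotation preserves the cyclic order on $S^1$, $L_j$ separates $x$ and $y$ if and only if $S(L_j)$ separates $S(x)$ and $S(y)$. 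The chord from $x$ to $y$ is mapped to the chord from $S(x)$ to $S(y)$, so the separation vector of $(S(x),S(y))$ is exactly $(S(L_{a_1}),\dots,S(L_{a_s}))$, in the same order.
\item Each leaf collapses to a point under $T$, and $T(S(L)) = S(T(L))$. Hence every target vertex listed in Thurston's algorithm for $m'$ is the $S$-image of the corresponding target for $m$.
\end{enumerate}
Thus $A(m')=A(m)$ after relabeling vertices, so $\rho(A(m'))=\rho(A(m))$ and $h(m')=h(m)$.

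\textbf{General case.} Given $M\in PM(d)$, choose rational majors $m_n\to M$. Then $S(m_n)\to S(M)$ by continuity of $S$, and continuity of $h$ on $PM(d)$ gives
\[
h(S(M))=\lim_{n\to\infty} h(S(m_n))=\lim_{n\to\infty} h(m_n)=h(M).
\]
Alternatively, one can avoid approximation: the same relabeling $y_k(i)\mapsto y_k(i)$, with leaves indexed compatibly, gives an isomorphism $\Gamma(M)\cong\Gamma(S(M))$, because the separation data agree by step 2. This gives equal growth rates, and the Proposition above finishes the proof.

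The only step needing real care is the separation-vector claim in step 2, in particular that the order of the leaves is preserved. This follows because a rotation is an orientation-preserving isometry of the disc, so it maps the chord from $x$ to $y$ onto the chord from $S(x)$ to $S(y)$ and preserves the order in which that chord crosses the leaves.
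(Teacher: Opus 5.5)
Your proposal is correct and follows the paper's proof. Both arguments reduce to $RPM(d)$ by continuity of $S$ and $h$, then use $T\circ S = S\circ T$ to get a bijection of Thurston-graph vertices that preserves separation vectors, and hence edges, so the adjacency matrices agree up to relabeling. You are more careful than the paper in checking $S(M)\in PM(d)$ and in making the density step explicit. Your Tiozzo-graph variant is also valid and avoids the approximation step entirely.
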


\begin{proof}
Since $S$ and $h$ are both continuous it suffices to prove the claim on $RPM(d)$. Fix some primitive major $m = \{ l_1, ..., l_r \} \in RPM(d)$. Let $G_m$ denote its Thurston Graph and $A_m$ the adjacency matrix of the graph. Define $G_{S(m)}$ and $A_{S(m)}$ similarly.

A vertex $(u, v)$ of the graph $G_{S(m)}$ has the form $(T^n(S(l_i)), T^k(S(l_j)))$. By commutativity, this is the same as $(S(T^n(l_i)), S(T^k(l_j)))$. Hence $S$ induces a map from the vertex set of $G_m$ to the vertex set of $G_{S(m)}$ which, by construction, is a bijection.

The pair $(u, v) = (S(T^n(l_i)), S(T^k(l_j)))$ has separation vector $(S(l_{a_1}), ..., S(l_{a_p}))$ if and only if $(T^n(l_i), T^k(l_j))$ has separation vector $(l_{a_1}, ..., l_{a_p})$. Thus $S$ is edge preserving and so the leading eigenvalues of $A_m$ and $A_{S(m)}$ are the same.
 
\end{proof}

\subsection{Kneading Theory}

Let $M = \{ L_1, ..., L_r \} \in PM(d)$. Then $M$ induces an equivalence relation $\sim$ on $S^1$ by identifying endpoints of each leaf $L_j$. Each leaf $L_j$ is shrunk down to a single point giving a deformed circle $S^1 / \sim$. The connected components of $S^1 / \sim $ minus the (projection of) the endpoints of the leaves induces a partition of $S^1$ into $d$ pieces $I_1, ..., I_d$ each of length $\frac{1}{d}$.

Given a point $x \in S^1 \setminus \{ L_1, ..., L_r \}$, we can define $\sigma_M(x) = j$ if $x \in I_j$. Then for each $k$, we can define $\sigma_{M, k}(x) = \sigma_{M}(T^k(x))$. This definition makes sense provided that the iterates of $x$ never land on a leaf of $M$. Thus we can define the kneading sequence:
\[
\Sigma_M(x) = (\sigma_{M, 1}(x), \sigma_{M, 2}(x), ... ).
\]
Since the endpoints of the leaves of each $M = \{ L_1, ..., L_r \} \in PM(d)$ are identified by $T$, we can further define $\Sigma_{M}(L_j)$ for each $j$ by taking the kneading sequence of any endpoint of $L_j$. As before this only makes sense if the iterates of the leaves never land on a leaf.

\section{Total Separation}

For each pair of iterates of the leaves of a primitive major, the number of outgoing edges on the corresponding Thurston (or Tiozzo) graph equals the number of leaves which the pair separates (plus one). Thus, at least informally, the more leaves being separated the more entropy the primitive majors accumulates. We make this observation precise with the concept of a vertex being totally separated.

\begin{definition}
Let $M = \{ L_1, ..., L_r \} \in PM(d)$ and let $G(M)$ be the corresponding Thurston graph. We say that a vertex $(x, y)$ of $G(m)$ is totally separated if:
\begin{enumerate}
	\item the separation vector of $(x, y)$ is $\{ L_1, ..., L_r \}$, and
	\item if there is a path from $(x, y)$ to $(z, w)$ then the separation vector of $(z, w)$ is also $\{ L_1, ..., L_r \}$.
\end{enumerate}
We say that the graph $G(M)$ is totally separated if it contains a totally separated vertex and that the primitive major $M$ is totally separated if $G(M)$ is. Note we may similarily define totally separated for the Tiozzo graph $\Gamma(M)$.
\end{definition}

When $M = \{ L_1, ..., L_r \}$ is totally separated we can impose a preferred ordering on the leaves. Indeed, if a vertex $(u, v)$ is totally separated then the separation vector is $\{ L_1, ..., L_r \}$ meaning that, after possibly relabeling the leaves, we can assume that going from $u$ to $v$ crosses $(L_1, ..., L_r)$. Because of this we obtain:

\begin{lemma}
\label{totseppairs}
If $M = \{ L_1, ..., L_r \} \in PM(d)$ is totally separated and $r \geqslant 2$ then (after possibly relabeling the leaves) for each $j$, the pair $(T(L_j), T(L_{j+1}))$ is totally separated. Moreover, any totally separated vertex $v$ has an edge in $G(M)$ to $(T(L_j), T(L_{j+1}))$. If $r = 1$ then the pair $(T(L_1), T^2(L_1))$ is totally separated. Moreover any totally separated vertex $v$ has a path of length $2$ in $G(M)$ to $(T(L_1), T^2(L_1))$.
\end{lemma}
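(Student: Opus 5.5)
Let $v=(u,w)$ be a totally separated vertex. After relabeling, the path from $u$ to $w$ crosses the leaves in the order $(L_1,\dots,L_r)$. The plan is to read the edges out of $v$ directly from the definition of the Thurston graph and then use condition (2) of total separation to propagate the property to the targets.

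For $r\ge 2$, the separation vector of $(u,w)$ is $(L_1,\dots,L_r)$, so by construction $v$ has edges to
\[
\{T(u),T(L_1)\},\ \{T(L_1),T(L_2)\},\ \dots,\ \{T(L_{r-1}),T(L_r)\},\ \{T(L_r),T(w)\}.
\]
In particular $v$ has an edge to each $(T(L_j),T(L_{j+1}))$ for $1\le j\le r-1$, which is the "moreover" clause. Since $v$ is totally separated, every vertex reachable from $v$ has separation vector $\{L_1,\dots,L_r\}$. Hence each $(T(L_j),T(L_{j+1}))$ satisfies condition (1). Any vertex reachable from $(T(L_j),T(L_{j+1}))$ is also reachable from $v$, so condition (2) holds as well, and the pair is totally separated.

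For $r=1$, $v$ has edges to $\{T(u),T(L_1)\}$ and $\{T(L_1),T(w)\}$. Both are reachable from $v$, so both are separated by $L_1$. Pick either one, say $v'=\{T(u),T(L_1)\}$. Its separation vector is $(L_1)$, so it has an edge to $\{T(L_1),T(L_1)\}$, which is degenerate, and an edge to $\{T(L_1),T^2(L_1)\}$. Orienting the pair so that the point $T(L_1)$ comes first, we get $\{T^2(L_1),T(T(L_1))\}$ in the appropriate order, namely $(T(L_1),T^2(L_1))$. This is a path of length $2$ from $v$, and the pair is totally separated by the same heredity argument as before.

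I expect the main obstacle to be bookkeeping rather than substance. First, the separation vectors of the various totally separated vertices must all induce the same ordering of the leaves; otherwise the indices in "$(T(L_j),T(L_{j+1}))$" would depend on the vertex. I would handle this by noting that the cyclic order of the leaves along $S^1$ is intrinsic. Any pair separated by all leaves crosses them either in one fixed order or in its reverse, and swapping the orientation of the pair swaps the two. Second, the Thurston graph only contains pairs of distinct points, so in the degenerate case $T(L_1)=T(w)$ the argument must take the other edge, $\{T(u),T(L_1)\}$. Since $T(L_1)$ is separated from both $T(u)$ and $T(w)$ only when both are non-degenerate, at least one of the two edges is genuine.
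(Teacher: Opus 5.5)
Your proof is correct and follows the paper's own argument: read the outgoing edges of a totally separated vertex off its separation vector and pass total separation along paths, with the $r=1$ case routed through $\{T(u),T(L_1)\}$ just as the paper routes it through $(T(L_1),T(x))$. One slip, which does not affect the conclusion: the edges out of $\{T(u),T(L_1)\}$ are $\{T^2(u),T(L_1)\}$ and $\{T(L_1),T^2(L_1)\}$, not a degenerate $\{T(L_1),T(L_1)\}$, and in fact no degenerate edges arise at all, since $u,w\notin\partial L_1$.
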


\begin{proof}
Assume $r \geqslant 2$ and suppose $G(M)$ is totally separated. Then there is a vertex $(x, y)$ in $G(M)$ which is totally separated. After reordering we can take its separation vector to be $(L_1, ..., L_r)$. Thus there are edges:
\begin{align*}
&(x, y) \rightarrow (T(x), T(L_1)), \\
&(x,y) \rightarrow (T(L_1), T(L_2)), \\
&(x, y) \rightarrow (T(L_2), T(L_3)), \\
&..., \\
&(x, y) \rightarrow (T(L_{r-1}), T(L_r)) \\
&(x, y) \rightarrow (T(y), T(L_{r})). 
\end{align*}
By definition, each of these is totally separated which proves the claim.

When $r = 1$ similar reasoning to the above shows that a totally separated pair $(x, y)$ will have an edge to $(T(L_1), T(x))$. This must also then be totally separated and it has an edge to $(T(L_1), T^2(L_1))$ which proves the claim.
\end{proof}

\begin{proposition}
\label{definedkneading}
Let $M = \{ L_1, ..., L_r \} \in PM(d)$ such that $G(M)$ is totally separated. Then $\Sigma_M(L_j)$ is well defined for each $j$.
\end{proposition}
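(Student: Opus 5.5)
We must show that no forward iterate $T^k(L_j)$, $k\geqslant 1$, lands on an endpoint of a leaf of $M$. Then $\sigma_{M,k}(L_j)$ is defined for every $k$, and hence so is $\Sigma_M(L_j)$. The plan is to argue by contradiction, using that total separation is inherited along paths in $G(M)$. Lemma \ref{totseppairs} supplies totally separated vertices whose coordinates are exactly the iterates of the leaves.

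First, the vertex coordinates. Assume $r\geqslant 2$ and label the leaves as in Lemma \ref{totseppairs}. Then each $(T(L_j),T(L_{j+1}))$ is a totally separated vertex, and so is every vertex reachable from it. The edges out of a vertex $(x,y)$ with separation vector $(L_1,\dots,L_r)$ include $(T(x),T(L_1))$ and $(T(L_r),T(y))$. Inducting on $n$, I will show that for each $j$ and each $n\geqslant 1$ there is a vertex reachable from a totally separated vertex that has $T^n(L_j)$ as one coordinate. From the vertex containing $T^n(L_j)$, one of the two extreme edges produces a vertex containing $T^{n+1}(L_j)$. For $r=1$ the same argument uses the pair $(T(L_1),T^2(L_1))$ from the lemma and the edges $(x,y)\to(T(x),T(L_1))$ and $(T(L_1),T(y))$.

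Second, the contradiction. Suppose $T^n(L_j)$ is an endpoint of some leaf $L_i$. Take the reachable vertex $(T^n(L_j),w)$ from the first step; since it is reachable from a totally separated vertex, it is itself totally separated. Its separation vector must then be the full $\{L_1,\dots,L_r\}$, so in particular $L_i$ must separate $T^n(L_j)$ from $w$. But a point lying on $\partial L_i$ lies in no component of $S^1\setminus\partial L_i$, so $L_i$ cannot separate it from anything. This is the same reason the Thurston graph assigns no edges when both points lie on a common leaf.

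The main obstacle is this degenerate case. I need to check carefully that the separation-vector convention really excludes a leaf whose boundary contains one of the two points. The first step should also be made robust: if $T^n(L_j)$ coincides with $w$, the vertex may collapse. That case is handled either by the $\{x,x\}$ convention or by noting that a vertex with equal coordinates is separated by no leaf, which already contradicts total separation.
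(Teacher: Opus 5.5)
Your proof is correct and follows the paper's argument: you use Lemma \ref{totseppairs} to get the totally separated vertices $(T(L_j),T(L_{j+1}))$ (or $(T(L_1),T^2(L_1))$ when $r=1$), push $T^n(L_j)$ forward along an extreme edge to a vertex that remains totally separated, and note that a coordinate lying on $\partial L_i$ cannot be separated by $L_i$. Your extra worry about collapsed pairs, which the paper does not address either, never arises: a point off the leaves and the first leaf it crosses (or two consecutive crossed leaves) lie in the closure of a single unlinked class, and on that closure $T$ only identifies endpoints of the same leaf.
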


\begin{proof}
Assume $r \geqslant 2$ and suppose $G(M)$ is totally separated for some $M = \{ L_1, ..., L_r \} \in PM(d)$. Then, after relabeling, know that each pair $(T(L_j), T(L_{j+1}))$ is totally separated and hence has separation vector $(L_1, L_2, ..., L_{r})$ or $(L_r, ..., L_2, L_1)$ implying $T(L_j)$ does not lie on a leaf. One of the outgoing edges from this vertex will be $(T(L_1), T^2(L_j))$ or $(T(L_r), T^2(L_j))$ which then must also be totally separated. Thus by the same reasoning $T^2(L_j)$ can not lie on a leaf either. Repeating we see every iterate of $L_j$ do not land on any leaves and so $\Sigma_M(L_j)$ is well defined for each $j$.

If $r=1$ then a similar argument shows that, since $(T(L_1), T^2(L_1))$ is totally separated, that $(T(L_1), T^n(L_1))$ is totally separated for all $n \geqslant 2$ and hence all the iterates of $L_1$ do not land on $L_1$. 
\end{proof}

Our aim is to show that maximal entropy occurs only when the graph $G(M)$ is totally separated. We begin by considering $RPM(d)$. In this case the corresponding Thurston graph will be finite and so the result is fairly straightforward linear algebra. This proof is essentially a part of the proof of Theorem D in \cite{AF} rewritten in the language of laminations.

\begin{lemma}
\label{RPMlemma}
Suppose $m = \{ l_1, ..., l_r \} \in RPM(d)$. Then the core entropy $h(m)$ is $\leqslant \log(r+1)$ with equality if and only if there is some pair $(x, y)$ which is totally separated.\end{lemma}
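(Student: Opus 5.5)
The plan is to work directly with the finite adjacency matrix $A = A(m)$ of the Thurston graph $G(m)$ and compare it with the constant $r+1$ via row sums.

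\emph{Upper bound.} By construction, a vertex $\{x,y\}$ whose separation vector has length $s$ has exactly $s+1$ outgoing edges, counted with multiplicity. A vertex whose endpoints lie on a common leaf has none. Since $s \leqslant r$, every row sum of $A$ is at most $r+1$. The Perron--Frobenius bound $\rho(A) \leqslant \max_i \sum_j A_{ij}$ then gives $h(m) = \log \rho(A) \leqslant \log(r+1)$.

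\emph{Equality implies total separation.} Suppose $\rho(A) = r+1$. By Perron--Frobenius there is a nonnegative eigenvector $w \neq 0$ with $A w = (r+1) w$. Let $C$ be the set of vertices in the support of $w$. The row sum at a vertex is the total number of outgoing edges, whether or not they land in $C$. Choose $v\in C$ maximizing $w_v$. Then $(r+1)w_v = \sum_u A_{vu}w_u \leqslant (\text{row sum at } v)\, w_v \leqslant (r+1)w_v$. Equality forces two things: the row sum at $v$ equals $r+1$, and every out-neighbour $u$ of $v$ has $w_u = w_v$. Applying the same argument to each such $u$ and iterating shows the following. Every vertex reachable from $v$ has $w$-value equal to $\max w$, and row sum exactly $r+1$, i.e. separation vector of length $r$, i.e. separated by all $r$ leaves. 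Hence $v$ is totally separated in the sense of the definition: its separation vector contains every leaf, and so does that of every vertex reachable from it.

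\emph{Total separation implies equality.} Conversely, let $(x,y)$ be totally separated and let $R$ be the set of vertices reachable from it. $R$ is finite and closed under outgoing edges, so the restriction of $A$ to $R$ is a principal submatrix. Every vertex in $R$ has all $r+1$ of its out-edges landing in $R$, so this submatrix has constant row sum $r+1$. Its spectral radius is therefore exactly $r+1$, witnessed by the all-ones vector. The spectral radius of a nonnegative matrix is at least that of any principal submatrix, so $\rho(A) \geqslant r+1$, and combined with the upper bound we get equality.

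\emph{Main obstacle.} The delicate point is the forward direction, specifically the maximum-entry argument. One must use the full row sum (including edges leaving the support) and then propagate the equality along paths. A secondary bookkeeping issue is the degenerate case: when $P$ is a singleton, $V$ consists of a single vertex $\{x,x\}$. I would handle it separately. There $x$ lies on every leaf, so its row is zero and it cannot be totally separated, and the statement reduces to $h(m)=0<\log(r+1)$. Multiple edges, where some $A_{ij}$ exceed $1$, cause no difficulty, since they are counted in the row sums.
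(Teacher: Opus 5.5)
Your proof is correct, but it reaches the key implication by a different route than the paper.

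\textbf{Forward direction.} For ``$h(m)=\log(r+1)$ implies total separation'' the paper argues by contraposition. If no vertex is totally separated, then for some $n$ every vertex starts fewer than $(r+1)^n$ paths of length $n$ (column sums of $A^n$ in the paper's convention). So these counts are at most $s^n$ for some $s<r+1$, and iterating $\mathbf{1}A^{n}\leqslant s^{n}\mathbf{1}$ gives $\rho(A)\leqslant s$.

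You instead take a nonnegative Perron eigenvector and run a maximum principle. This produces a totally separated vertex directly: any maximizer of $w$. Your version is shorter and says more about where the totally separated vertices are. However, it needs the Perron--Frobenius theorem for possibly reducible matrices and gives no explicit gap below $r+1$.

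The paper's path-counting version is fully elementary and quantitative: it bounds $h(m)\leqslant\log s$ in terms of path counts of a fixed length. That is exactly the form reused in Lemma \ref{totsep1}. There, a bound $\gamma<r+1$ that is uniform over rational majors near a non-rational $M$ is needed before passing to the limit, and your eigenvector argument would not supply it.

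\textbf{Converse.} The paper gives no argument for this inside the proof. It calls the rational case trivial, and Lemma \ref{totsep2} later handles all of $PM(d)$ by counting closed paths in Tiozzo's graph. Your forward-closed set $R$, on which $A$ has constant row sum $r+1$, fills this in cleanly.

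The only input it uses is that a vertex separated by all $r$ leaves has exactly $r+1$ out-edges, none of which degenerates to a single point. The paper also takes this for granted. It holds because $T$ is injective on each gap modulo the leaf identifications, so consecutive leaves in a separation vector, and a point together with the first leaf it must cross, have distinct images.
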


\begin{proof}
Consider $A$ the $c \times c$ transition matrix associated to the Thurston graph $G(m)$ of $m$. Since $m$ is rational, $G(m)$ and $A$ are both finite. Denote by $r(A)$ the leading eigenvalue of $A$. If $m$ consists of $r$ leaves then each vertex in $G(m)$ has at most $r+1$ outgoing edges, hence the sum of the entries in each column of $A$ is at most $r+1$. This forces that $r(A) \leqslant r+1$ and hence that $h(m) = \log(r(A)) \leqslant \log(r+1)$.

Suppose that $G(m)$ does not contain a totally separated edge. Then for any pair $(x, y)$, any path from $(x, y)$ eventually leads to an edge with strictly fewer then $r+1$ outgoing edges.

It follows that for some $n$ large enough, the sum of the entries in any column of $A^n$ are all strictly less then $(r+1)^n$. Let $s < r+1$ so that the sum of the entires of any column of $M^n$ is $\leqslant s^n$.

By abuse of notation we will say that $u \leqslant v$ for two vectors $u,  v$ if for all $j$, the $j^{th}$ entry of $u$ is $\leqslant$ the $j^{th}$ entry of $v$. Then if we let $v$ denote the vector containing all $1's$ of dimension $c$ the same as the dimension of the columns of $M$ then we have:
\[
vA^n \leqslant s^n v.
\]
Repeating we find that for all $p \in \mathbb{N}$:
\[
vA^{np} \leqslant s^{np}v.
\]
This implies that:
\[
vA^{np}v^{T} \leqslant cs^{np}.
\]
Then:
\begin{align*}
r(A) &= \lim_{p \rightarrow \infty} (vA^p v^{T})^{\frac{1}{p}} \\
&= \lim_{p \rightarrow \infty} (vA^{pn}v^{T})^{\frac{1}{np}} \\
&\leqslant \lim_{p \rightarrow \infty} (cs^{np})^{\frac{1}{np}} \\
&= s \\
&< r+1.
\end{align*}

\end{proof}

The above only works if the primitive major is rational. Here we try to generalize the main result from before to arbitrary primitive majors.

\begin{lemma}
\label{totsep1}
Suppose that $M = \{ L_1, L_2, ..., L_r \} \in PM(d)$. If $h(M) = \log(r+1)$ then M is totally separated.
\end{lemma}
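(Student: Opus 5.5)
We prove the contrapositive: if $M$ is not totally separated, then $h(M) < \log(r+1)$. The Thurston graph $G(M)$ may be infinite, so the matrix argument of Lemma \ref{RPMlemma} does not apply directly. Instead we work with the Tiozzo graph $\Gamma(M)$ and the formula $h(M) = \log r(\Gamma(M))$ from the Proposition of Section 2.2. The key point is that $\Gamma(M)$ has a structure well suited to counting. Each vertex has at most $r+1$ outgoing edges. Every vertex that is separated by at least one leaf sends all but one of its outgoing edges into the finite set $F = \{\{y_a(1), y_b(1)\}\}$, and the remaining edge goes to $\{y_k(i+1), y_{a_1}(1)\}$ or $\{y_{a_s}(1), y_l(j+1)\}$. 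So paths in $\Gamma(M)$ return to the finite "first level" constantly, and a closed path of length $n$ is essentially determined by its sequence of returns to $F$.

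First, we translate "not totally separated" into a uniform combinatorial defect. For each vertex $w$, call $w$ \emph{full} if its separation vector consists of all $r$ leaves, so that it has exactly $r+1$ outgoing edges. Suppose no vertex of $G(M)$ is totally separated. Then every vertex reachable from an element of $F$ eventually reaches a non-full vertex. We want this bound to be uniform: there is an $N$ such that every path of length $N$ starting from any vertex in $F$ (or from any vertex whatsoever) passes through a non-full vertex. Here we use the fact that full vertices are rigid. A vertex of the form $\{y_k(i), y_l(j)\}$ reached by staying full for $n$ steps from $F$ is one of finitely many "iterate pairs" $\{y_a(1+n'), y_b(1+n')\}$ or $\{y_k(\cdot), y_a(1)\}$-type vertices. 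Since $F$ is finite, a compactness (König's lemma) argument applies. If for every $N$ there were a full path of length $N$ from $F$, the tree of full paths from $F$ would be finitely branching (at most $r+1$ children) and infinite. It would therefore contain an infinite full path, and its initial vertex would be a vertex all of whose descendants along that path are full. This alone is not enough, because total separation needs all descendants to be full. So we apply König's lemma more carefully to the set of vertices all of whose length-$N$ descendants are full. These sets are nested and decreasing, each intersects the finite first-level set $F$ nontrivially, and hence some vertex of $F$ lies in all of them, making it totally separated. (A totally separated vertex in $\Gamma$ projects to one in $G$, consistent with the Definition.) By Lemma \ref{totseppairs}, every totally separated vertex leads to one in $F$, so restricting attention to $F$ loses nothing.

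Second, given $N$, we bound the number of paths of length $n$ from any vertex by $s^n$ up to a constant, for some $s < r+1$. Every vertex of $\Gamma(M)$ reaches $F$ within one step unless it is unseparated. Unseparated vertices have a single outgoing edge, which only helps the bound. Since every length-$N$ path passes through a vertex with at most $r$ outgoing edges, the number of length-$N$ paths from any vertex is at most $(r+1)^N - 1 =: s^N$. Iterating in blocks, as in the vector inequality $vA^{np} \leqslant s^{np}v$ of Lemma \ref{RPMlemma}, gives that the number of paths of length $n$ is at most $C s^n$ starting from any fixed vertex.

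Finally, we must count closed paths. Every closed path of length $n \geq 2$ that contains a separated vertex passes through $F$. Any closed path made up entirely of unseparated vertices would be a single forced cycle, and $y$-labels strictly increase along it, so there are none. Hence $C(\Gamma, n) \leqslant |F| \cdot C s^n$, and therefore $r(\Gamma(M)) \leqslant s < r+1$, which contradicts $h(M) = \log(r+1)$.

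I expect the main obstacle to be the first step, namely extracting a uniform $N$ from the pointwise failure of total separation. The argument needs to show that the nested sets $\{w \in F : \text{all descendants of } w \text{ up to depth } N \text{ are full}\}$ being nonempty for all $N$ forces a genuinely totally separated vertex. This works only because $F$ is finite and the condition at depth $N$ is closed under decreasing $N$, so the intersection of these nonempty subsets of the finite set $F$ is nonempty. The care needed is in checking that vertices outside $F$ are handled by their one-step passage into $F$.
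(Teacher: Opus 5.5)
Your route differs from the paper's. The paper approximates $M$ by rational majors, applies the matrix bound of Lemma \ref{RPMlemma} to the estimate $r(r+1)^{N}+P(M)$, passes to the limit by continuity, and then needs separate perturbation arguments when an iterate of a leaf lands on a leaf, plus a separate $r=1$ case. Counting directly in $\Gamma(M)$, using the Proposition of Section 2.2 for all of $PM(d)$, would avoid both the approximation and that case analysis.

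As written, however, your final step is false, because nothing forces a closed path through $F$. If $L_1$ is the first leaf crossed going from $\pi(y_1(1))$ to $\pi(y_1(2))$, the first edge out of $\{y_1(1),y_1(2)\}$ is $\{y_1(2),y_1(1)\}$, a self-loop. Repeating it gives closed paths of every length that never meet $F$. Since $\Gamma(M)$ is infinite, the bound $C(\Gamma,n)\leqslant |F|\cdot Cs^n$ needs control on the possible base points; without it even finiteness of $C(\Gamma,n)$ is unclear. The fix is as follows. A closed path must contain a separated vertex, after which it sits at some $\{y_a(1),y_c(m)\}$. Labels of level $\geqslant 2$ arise only by incrementing, so tracing $y_c(m)$ backwards around a loop of length $n$ forces $m\leqslant n+1$. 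Hence closed paths of length $n$ pass through at most $r^2(n+1)$ vertices of this form, $C(\Gamma,n)$ is at most a polynomial in $n$ times $Cs^n$, and $r(\Gamma(M))\leqslant s$ survives.

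Two other claims are false as stated, though what you actually use is true, and one case is missing.

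\emph{Reaching $F$ in one step.} It is not true that every separated vertex reaches $F$ in one step. A vertex separated by a single leaf $L_a$ has edges only to $\{y_k(i+1),y_a(1)\}$ and $\{y_a(1),y_l(j+1)\}$. What holds for $r\geqslant 2$ is that a \emph{full} vertex has $r-1\geqslant 1$ middle edges into $F$, while a non-full vertex has at most $r$ edges. With a deficit at each vertex of $F$, this gives at most $r(r+1)^N+\bigl((r+1)^N-1\bigr)=(r+1)^{N+1}-1$ paths of length $N+1$ from every vertex, which is exactly the paper's estimate.

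\emph{Your first step.} Its conclusion, that every path of length $N$ meets a non-full vertex, fails in general. If $\{y_1(1),y_2(1)\}$ is full with separation vector $(L_1,\dots,L_r)$, its middle edge between $L_1$ and $L_2$ is a self-loop. So full paths of every length exist even when $M$ is not totally separated. Finiteness of $F$ (a maximum over finitely many $N_w$; K\"onig's lemma is unnecessary) only yields that each $w\in F$ has \emph{some} non-full descendant within $N$ steps. That weaker statement is all the deficit bound needs.

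\emph{The case $r=1$.} This is not covered: $F$ is empty or the degenerate $\{\{y_1(1),y_1(1)\}\}$, which no separated vertex maps to. There you need the paper's device. Any vertex with four paths of length $2$ reaches $\{y_1(1),y_1(2)\}$ in two steps, so the deficit at that single vertex gives at most $2^{N+2}-1$ paths of length $N+2$ from every vertex.
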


\begin{proof}
Suppose that $M$ is not totally separated and that $r > 1$. Order the leaves of $M$ so that there is some pair of points on $S^1$ with separation vector $(L_1, L_2, ..., L_r)$ (if this is not possible then we would have $h(M) \leqslant \log(r) < \log(r+1)$). Suppose further that no iterate of any leaf lands on another leaf. Since $M$ is not totally separated, there is some number $N(M)$ such that the number of paths of length $N(M)$ starting from the vertex $(T(L_1), T(L_2))$ in the Thurston graph $G(M)$ is strictly less then $(r+1)^{N(M)}$, call the number of paths $P(M)$. By continuity, it follows if $m = \{ l_1, ..., l_r \} \in RPM(d)$ is close enough to $M$ then the number of paths of length $N(M)$ starting from $(T(l_1), T(l_2))$ in $G(m)$ is also $P(M)$.

Let $(u, v)$ be any vertex in $G(m)$. If the number of edges from $(u, v)$ is $< r+1$ then the number of paths of length $N(M)+1$ starting from $(u, v)$ is at most $r(r+1)^{N(M)}$. If the number of edges is $r+1$ then there is an edge from $(u, v)$ to $(T(L_1), T(L_2))$. Thus there are at most $r(r+1)^{N(M)} + P(M)$ paths of length $N(M)+1$ starting from $(u, v)$. As $P(M) < (r+1)^{N(M)}$, this upper bound is also $< (r+1)^{N(M)+1}$.

Thus for all $m$ close enough to $M$, and any vertex $(u, v)$ in $G(m)$, the number of paths of length $N(M)+1$ starting from $(u, v)$ is at most $r(r+1)^{N(M)} + P(M)$ and hence is uniformly less then $(r+1)^{N(M)+1}$. Let $\gamma$ be such that $(r+1)^{N(M)} + P(M) < \gamma^{N(M)+1} < (r+1)^{N(M)+1}$. Then by the proof of Lemma \ref{RPMlemma} we have $h(m) \leqslant \log(\gamma) < \log(r+1)$ for all $m$ close to $M$. Hence $h(M) \leqslant \log(\gamma) < \log(r+1)$ also.

Still assuming $r > 1$ suppose that some iterate of a leaf maps onto the endpoint of a leaf. This divides into two further cases, either a leaf maps to itself or to a distinct leaf. Suppose that some leaf, say $L_i$, maps onto one of its own endpoints, that is, $T^n(L_i) \in \partial L_i$. Then any separation vector of any vertex in $G(M)$ including $T^n(L_i)$ must have cardinality less then $r$, hence $N(M) \leqslant n + 1$. Moreover, notice that this forces the endpoints of $L_i$ to be rational numbers. Consider $m \in RPM(d)$ belonging to the same stratum as $M$ and so that $L_i \in m$. Even if $m$ is arbitrarily close to $M$ it may be that $N(m) > N(M)$ however notice that $N(m) \leqslant n+1$ also. Indeed, if there are $(r+1)^{n+1}$ paths of length $n+1$ coming from a vertex $(T(L_1), T(L_2))$ in $G(m)$ then one of these paths terminates at a vertex containing $T^n(L_i)$. If we take $\gamma$ such that $\gamma^{n+1} < (r+1)^{n+1}$ then $h(m) \leqslant \log(\gamma)$ and so by continuity $h(M) \leqslant \log(\gamma) < \log(r+1)$ also.

Now assume that some leaf iterates onto a different leaf, that is, that $T^n(L_i) \in L_j$ for some $n$ and some $i \neq j$. Then if we perturb $L_i$ slightly to some new leaf of the same degree with rational endpoints $l_i$ then $T^n(l_i)$ will be a rational number close to $L_j$. Perturbing $L_j$ to $l_j$ so that $T^n(l_i)$ is an endpoint of $l_j$ we can then perturb all the other leafs to rational numbers to construct rational primitive majors arbitrarily close to $M$ such that $T^n(l_i)$ lands on an endpoint of $l_j$. Then by the same reasoning as the previous case we have $h(m)$ is uniformly less than $\log(r+1)$ for these rational primitive majors, hence by continuity $h(M)$ is also strictly less than $\log(r+1)$.

Finally consider the case where $r=1$. If the unique leaf  iterates onto itself then its endpoints are rational, hence we are in $RPM(d)$ and the result follows from Lemma \ref{RPMlemma}. Otherwise, notice that any vertex $(u, v)$ with $4$ paths of length $2$ starting from $(u, v)$ will have a path to $(T(L_1), T^2(L_1))$. Hence the same argument as in the first case applied to this vertex gives the result.

\end{proof}

Note we only proved one direction of the if and only if. To obtain the other direction for a rational primitive major is trivial. In theory we could approximate a general element of $PM(d)$ by elements of $RPM(d)$ like above and try to prove the converse that way however the estimates become complicated. Instead we leverage Tiozzo's algorithm which provides a much easier proof.

\begin{lemma}
\label{totsep2}
Suppose that $M = \{ L_1, L_2, ..., L_r \} \in PM(d)$. If $M$ is totally separated then $h(M) = \log(r+1)$.
\end{lemma}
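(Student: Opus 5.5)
Since $h(M) = \log r(\Gamma(M))$ by the Proposition of \cite{GT}, I will work in the Tiozzo graph $\Gamma(M)$ and produce a strongly connected piece of $\Gamma(M)$ in which every vertex has out-degree exactly $r+1$. Counting closed paths in that piece will give the lower bound $r(\Gamma(M)) \geqslant r+1$. The upper bound $h(M) \leqslant \log(r+1)$ should follow by continuity from Lemma \ref{RPMlemma}, since $h$ on $PM(d)$ is the continuous extension of $h$ on $RPM(d)$. It can also be seen directly: every vertex of $\Gamma(M)$ has at most $r+1$ outgoing edges, so the number of paths of length $n$ from a vertex is at most $(r+1)^n$.

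For the lower bound, assume $r \geqslant 2$; the case $r=1$ is handled in the same way using the pair $(T(L_1),T^2(L_1))$ from Lemma \ref{totseppairs}. By Lemma \ref{totseppairs}, after relabeling, each pair $\{T(L_j),T(L_{j+1})\}$ is totally separated. Translated to $\Gamma(M)$, this says the vertex $w_j=\{y_j(1),y_{j+1}(1)\}$ has separation vector $(L_1,\dots,L_r)$ or its reverse. Let $\mathcal{F}$ be the set of vertices reachable from $w_1$. The key point is that the vertices of $\Gamma(M)$ project under $\pi$ to vertices of $G(M)$, and the edges are compatible under this projection: the edge rules are identical except that $\Gamma$ keeps the labels distinct. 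Condition (2) of total separation then shows that every vertex of $\mathcal{F}$ separates all $r$ leaves, so it has out-degree exactly $r+1$. Proposition \ref{definedkneading} guarantees that no iterate lands on a leaf, so no vertex of $\mathcal{F}$ has a dead end. Finally, every vertex of $\mathcal{F}$ has among its $r+1$ edges one edge to each of $\{y_j(1),y_{j+1}(1)\}=w_j$ for $j=1,\dots,r-1$, and in particular an edge back to $w_1$.

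Now count closed paths. Every vertex of $\mathcal{F}$ has out-degree $r+1$, and all edges out of $\mathcal{F}$ stay in $\mathcal{F}$. Hence the number of paths of length $n-1$ starting at $w_1$ is exactly $(r+1)^{n-1}$. Each such path ends at a vertex of $\mathcal{F}$, which has an edge back to $w_1$. Appending that edge gives at least $(r+1)^{n-1}$ distinct closed paths of length $n$ through $w_1$. Therefore $C(\Gamma(M),n)^{1/n} \geqslant (r+1)^{(n-1)/n} \to r+1$, so $r(\Gamma(M)) \geqslant r+1$. Combined with the upper bound, this gives $h(M)=\log(r+1)$.

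The main obstacle is the transfer of total separation from the Thurston graph to the Tiozzo graph. Total separation is defined in $G(M)$, but the vertices of $\Gamma(M)$ are labeled symbols, and I must check that edges correspond under $\pi$. I also need to handle the degenerate situation where $\pi$ identifies two symbols, which would give the vertex $\{x,x\}$. I expect this is excluded because total separation forces the endpoints of each pair to be separated by all leaves, and hence to be distinct. I will also note that the paper's remark after the definition allows defining total separation directly on $\Gamma(M)$, which sidesteps most of this bookkeeping.
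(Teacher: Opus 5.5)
Your proposal is correct and follows the paper's proof step for step: you pass to the Tiozzo graph, note that every vertex reachable from $\{y_1(1),y_2(1)\}$ has out-degree $r+1$ and an edge back to it, count at least $(r+1)^{n-1}$ closed paths of length $n$, and take the upper bound from Lemma~\ref{RPMlemma} plus continuity; your check that total separation transfers from $G(M)$ to $\Gamma(M)$ via $\pi$ is a detail the paper leaves implicit. One caveat: your ``direct'' alternative for the upper bound is incomplete as stated, since bounding paths from each vertex by $(r+1)^n$ does not by itself bound $C(\Gamma(M),n)$ on an infinite graph (you would also need that closed paths of length $n$ visit only polynomially many vertices), but the continuity argument you give first suffices.
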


\begin{proof}
Let $\Gamma$ be the Tiozzo graph of $M$. First suppose that $r > 1$. Since $M$ is totally separated we know by lemma \ref{totseppairs} that the vertex $v = (y_1(1), y_2(1))$ is totally separated. Thus, for any $n$, there are $(r+1)^n$ paths of length $n$ starting from $v$. 

Let $w = (x, y)$ be the endpoint of such a path. Then since $w$ is also totally separated there are edges:
\begin{align*}
&w \rightarrow (y_1(1), y_2(1)), \\
&w \rightarrow (y_2(1), y_3(1)), \\
&..., \\
&w \rightarrow (y_{r-1}(1), y_r(1)). 
\end{align*}

Thus there is a path of length $n+1$ from $v$ back to itself. This works for any path of length $n$ described above and so we have at least $(r+1)^n$ paths of length $n+1$ from $v$ to $v$. Thus:
\[
C(\Gamma(M), n+1) \geqslant (r+1)^n
\]
and so:
\[
r(\Gamma(M)) = \limsup_{n \rightarrow \infty} C(\Gamma(M), n+1)^{\frac{1}{n+1}} \geqslant r+1.
\]

This makes $h(M) \geqslant \log(r+1)$ and since $h(M) \leqslant \log(r+1)$ is always true we have equality.

Now suppose $r=1$. Then by lemma \ref{totseppairs} we know $(y_1(1), y_1(2))$ is totally separated. By definition this means there are $2^n$ paths of length $n$ starting from it. Let $w$ be the endpoint of such a path. As $w$ is also totally separated, there is a path of length $2$ from $w$ to $(y_1(1), y_1(2))$. Thus there are at least $2^n$ paths of length $n+2$ from $(y_1(1), y_1(2))$ back to itself.

From here, the same argument as in the $r > 1$ case gives $h(M) \geqslant \log(2)$. Since there is only one leaf, we have $h(M) \leqslant \log(2)$ as well so we are done.
\end{proof}

Finally, by combining Lemmas \ref{totsep1} and \ref{totsep2} we obtain:
\begin{theorem}
\label{totsepthm}
Given $M = \{ L_1, L_2, ..., L_r \} \in PM(d)$, $h(M) = \log(r+1)$ if and only if $M$ is totally separated. 
\end{theorem}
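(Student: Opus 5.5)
The plan is to obtain Theorem \ref{totsepthm} by combining the two one-directional lemmas already established, with the upper bound $h(M) \leqslant \log(r+1)$ as the common backdrop.

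For the forward direction, assume $h(M) = \log(r+1)$. This is exactly the hypothesis of Lemma \ref{totsep1}, so $M$ is totally separated. For the reverse direction, assume $M$ is totally separated. Lemma \ref{totsep2} then gives $h(M) = \log(r+1)$. Its proof establishes the lower bound $h(M) \geqslant \log(r+1)$ by counting closed paths in the Tiozzo graph $\Gamma(M)$, and closes the argument with the general upper bound.

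The one point to check is that this upper bound holds on all of $PM(d)$, not only on $RPM(d)$. On $RPM(d)$ it is Lemma \ref{RPMlemma}, since every vertex of $G(m)$ has at most $r+1$ outgoing edges. To pass to a general $M$ in a given stratum, I would approximate $M$ by rational majors in the same stratum. Each approximant has the same number $r$ of leaves, so each satisfies $h(m) \leqslant \log(r+1)$. Continuity of $h$ then gives $h(M) \leqslant \log(r+1)$.

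The main obstacle is confirming that such approximations within the same stratum always exist. This amounts to perturbing the leaves to nearby leaves with rational endpoints while keeping them disjoint, which Lemma \ref{totsep1} already uses freely. With the upper bound in hand, the two lemmas combine immediately to give the equivalence.
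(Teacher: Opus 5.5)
Your proposal is correct and is exactly the paper's argument: Theorem \ref{totsepthm} is obtained by combining Lemma \ref{totsep1} for the forward direction with Lemma \ref{totsep2} for the converse. Your additional check that $h(M) \leqslant \log(r+1)$ on all of $PM(d)$ makes explicit a step the paper only asserts as ``always true'' in the proof of Lemma \ref{totsep2}; it is sound, because rational majors are dense in each stratum (rotating each leaf slightly to rational endpoints keeps the $T$-identifications and disjointness) and $h$ is continuous.
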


\section{Maxima over $PM(d)$}

Here our goal is to classify all $M \in PM(d)$ such that $h(M) = \log(d)$. First, notice that for $h(M) = \log(d)$ it must be the case that $M$ consists of $d-1$ leaves so all maxima occur in the stratum $\Pi(1, 1, ..., 1)$. In this case, there can only be two intervals in the partition of $S^1$ induced by $M$, denote them by $I_1$ and $I_2$, such that for $(x, y) \in I_1 \times I_2$, we have the separation vector is $\{ L_1, ..., L_{d-1} \}$ in some order. Reordering if necessary, we can assume the separation vector is precisely $(L_1, L_2, ..., L_{d-1})$ or $(L_{d-1}, L_{d-2}, ..., L_1)$. It follows from Lemma \ref{totseppairs} that for each $j$, $(T(L_j), T(L_{j+1}))$ (or just $(T(L_1), T^2(L_1))$ when $d=2$) is totally separated and hence each $T(L_j)$ belongs to one of $I_1, I_2$. Then we will choose to label $I_1$ as the interval for which $T(L_1)$ lives in and $I_2$ the other. 

Interestingly, the classification depends on whether $d$ is even or odd. In both cases we will show there are exactly $d-1$ primitive majors which maximize core entropy. In the even case all these maxima are identified under $S(z) = z + \frac{1}{d-1} \mod 1$ while in the odd case there are two distinct equivalence classes.

% even case
\begin{proposition}
\label{evenprop}
Let $d$ be even and suppose $M \in PM(d)$ such that $h(M) = \log(d)$. Write $M = \{ L_1, ..., L_{d-1} \}$ as above and label the partition of $S^1$ it induces as above. Then $\Sigma_{M}(L_{2j+1}) = (1, \overline{2})$ and $\Sigma_{M}(L_{2j}) = (\overline{2})$.
\end{proposition}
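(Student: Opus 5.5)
The plan is to read off the kneading symbols directly from the edge structure of totally separated vertices in $G(M)$. By Theorem \ref{totsepthm}, $M$ is totally separated. By Proposition \ref{definedkneading}, every $\Sigma_M(L_j)$ is defined. The whole argument rests on one observation: a vertex $(x,y)$ has separation vector consisting of all $d-1$ leaves exactly when one of $x,y$ lies in $I_1$ and the other in $I_2$. So every totally separated vertex has one coordinate in each of $I_1$ and $I_2$.

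\textbf{Step 1 (first symbols).} Take a totally separated vertex $(x,y)$, oriented so that $x\in I_1$, $y\in I_2$ and the separation vector is $(L_1,\dots,L_{d-1})$. By Lemma \ref{totseppairs}, each vertex $(T(L_j),T(L_{j+1}))$ is totally separated. Hence $T(L_j)$ and $T(L_{j+1})$ lie in different intervals among $I_1, I_2$. Since $T(L_1)\in I_1$ by our labeling, the intervals alternate: $T(L_{2j+1})\in I_1$ and $T(L_{2j})\in I_2$. This gives the first kneading symbols, $1$ for odd-indexed leaves and $2$ for even-indexed leaves. Because $d$ is even, $d-1$ is odd, so in particular $T(L_{d-1})\in I_1$.

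\textbf{Step 2 (images of totally separated vertices land in $I_2$).} Next I would show that for any totally separated vertex $(x,y)$, both $T(x)$ and $T(y)$ lie in $I_2$.

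\begin{itemize}
\item With the orientation above, $(x,y)$ has edges to $(T(x),T(L_1))$ and to $(T(L_{d-1}),T(y))$. Both are totally separated, so each has one coordinate in $I_1$ and one in $I_2$.
\item Since $T(L_1)$ and $T(L_{d-1})$ both lie in $I_1$, this forces $T(x),T(y)\in I_2$.
\item If the vertex is oriented the other way, the separation vector is reversed. The same two edges appear with the roles of $x$ and $y$ swapped, and the conclusion is the same.
\end{itemize}

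This is the point where the parity of $d$ is used: $L_1$ and $L_{d-1}$ have images on the same side.

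\textbf{Step 3 (all later symbols are $2$).} I would prove by induction on $n\ge 1$ that $T^n(L_j)$ is a coordinate of some totally separated vertex reachable from $(x,y)$.

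\begin{itemize}
\item The base case $n=1$ is Step 1.
\item For the inductive step, let $(T^n(L_j),w)$ be such a vertex. Its outgoing edges include one to a vertex of the form $(T^{n+1}(L_j),T(L_1))$ or $(T(L_{d-1}),T^{n+1}(L_j))$, depending on orientation. That vertex is again totally separated by condition (2) of the definition.
\item Step 2 then shows $T^{n+1}(L_j)\in I_2$ for every $n\ge1$, i.e.\ every symbol after the first is $2$.
\end{itemize}

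Combining with Step 1 gives $\Sigma_M(L_{2j+1})=(1,\overline{2})$ and $\Sigma_M(L_{2j})=(\overline{2})$.

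The main obstacle I expect is bookkeeping of orientations: which coordinate gets paired with $T(L_1)$ and which with $T(L_{d-1})$ when the separation vector is traversed in reverse. Steps 2 and 3 must hold for both orientations. This works because $T(L_1)$ and $T(L_{d-1})$ lie in the same interval, so the argument is symmetric.
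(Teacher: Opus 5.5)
Your proposal is correct and follows essentially the same route as the paper: Lemma \ref{totseppairs} forces the first symbols to alternate, and since $d-1$ is odd both $T(L_1)$ and $T(L_{d-1})$ lie in $I_1$, which pushes both images of any totally separated vertex into $I_2$. Your Step 3 induction tracks $T^n(L_j)$ through the totally separated vertices it forms with $T(L_1)$ or $T(L_{d-1})$, and is a more careful version of the paper's ``Repeating'' step, which as written glosses over the fact that $(T(x),T(y))$ is not itself totally separated.
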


\begin{proof}
If $M$ is totally separated then we know by Lemma \ref{totseppairs} each consecutive pair $(T(L_j), T(L_{j+1}))$ is totally separated. Since there are $d-1$ leaves there are only two components which these iterates can live in, call them $I_1$ and $I_2$. Moreover, we have that $T(L_{2j})$ all lie in the same interval while $T(L_{2j+1})$ lie in the other. Denote by $I_1$ the interval which $T(L_1)$ (and hence $T(L_{2j+1})$ lie in for all $j$) and $I_2$ the other. Then we have that $\sigma_{M, 1}(L_{2j+1}) = 1$ and $\sigma_{M, 1}(L_{2j}) = 2$. 

Given a pair $(x, y) \in I_1 \times I_2$ which is totally separated, we have that the separation vector is either $(L_1, L_2, ..., L_{d-1})$ or $(L_{d-1}, L_{d-1}, ..., L_1)$. Hence we have two edges, either to $(T(x), T(L_1))$ and $(T(y), T(L_{d-1}))$ or to $(T(x), T(L_{d-1}))$ and $(T(y), T(L_1))$. Since $T(L_1)$ and $T(L_{d-1})$ both lie in $I_1$, regardless of which case we are in we see $T(x)$ and $T(y)$ must live in $I_2$. Repeating, we find that all iterates $T^n(x), T^n(y)$ are in $I_2$. In particular, this gives $T^n(L_{j}) \in I_2$ for all $n \geqslant 2$ and for all $j$. This means $\sigma_{M, n}(L_j) = 2$ for all $n \geqslant 2$ and for all $j$ which completes the proof.
\end{proof}

\begin{lemma}
\label{kneading-lemma-1}
Suppose that $M = \{ L_1, ..., L_{d-1} \} \in PM(d)$. Suppose that $x \in I_k$ has kneading sequence $(\overline{k})$ and that $T(\partial I_k) \cap I_k = \emptyset$ (for $k = 1$ or $2$). Then $T(x) = x$.
\end{lemma}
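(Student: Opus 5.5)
The idea is to use the inverse branch of $T$ on $I_k$ and show that it contracts $I_k$ into itself. Then the set of points whose whole forward orbit stays in $I_k$ is a single point, and both $x$ and $T(x)$ belong to it.

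\emph{Step 1: $T$ is injective on $I_k$.} I will use the setup of this section: $M$ has $d-1$ ideal lines, and $I_1, I_2$ are the two pieces of the partition of $S^1$ crossed first and last by a separation vector $(L_1,\dots,L_{d-1})$. Each of these is a complementary region bordered by a single leaf. So $I_k$ is an open arc of length $\frac1d$ whose two endpoints are the endpoints of one leaf $L$, and these are identified by $T$. Hence $T|_{I_k}$ is injective and multiplies lengths by $d$. It maps $I_k$ homeomorphically onto $S^1\setminus\{p\}$, where $p = T(\partial I_k)$ is a single point.

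\emph{Step 2: the inverse branch maps $I_k$ into itself.} Let $g\colon S^1\setminus\{p\}\to I_k$ be the inverse of $T|_{I_k}$. It is continuous and multiplies lengths by $\frac1d$. The hypothesis $T(\partial I_k)\cap I_k=\emptyset$ says exactly that $p\notin I_k$. So $I_k$ lies in the domain of $g$, and $g(I_k)\subset I_k$.

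\emph{Step 3: the surviving set is a point.} Iterating, $g^n(I_k)$ is a nested sequence of arcs of length $d^{-n-1}$. Hence $K=\bigcap_n g^n(I_k)$ contains at most one point.

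\emph{Step 4: $x$ and $T(x)$ both lie in $K$.} By hypothesis $x\in I_k$ and $\sigma_{M,n}(x)=k$ for all $n\ge1$, so $T^n(x)\in I_k$ for every $n\ge0$. Since $g$ inverts $T$ on $I_k$, we get $T^{n-1}(x)=g(T^n(x))$ whenever both points lie in $I_k$. Inductively, $x = g^n(T^n(x))\in g^n(I_k)$ for all $n$, so $x\in K$. The point $T(x)$ also lies in $I_k$ and has kneading sequence $(\overline{k})$, so the same argument gives $T(x)\in K$. Since $K$ has at most one point, $T(x)=x$.

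The main obstacle is Step 1, namely checking that $I_k$ really is a single arc bounded by one leaf. If $I_k$ were a union of several arcs, $g^n(I_k)$ could split into many small pieces and the intersection need not be a point. I would settle this from the structure established at the start of this section. A pair in $I_1\times I_2$ crosses all $d-1$ leaves in the order $L_1,\dots,L_{d-1}$, so the leaves are nested linearly. Then $I_1$ is the region cut off by $L_1$ alone, and $I_2$ the region cut off by $L_{d-1}$ alone. Such a region is an arc whose two boundary points form one leaf. If that argument needs supplementing, the fallback is a direct expansion argument. Suppose two distinct points $y,z$ of $I_k$ have orbits staying in $I_k$ forever. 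Their distance measured inside $I_k$ is multiplied by $d$ at each step as long as both stay in $I_k$. It would eventually exceed the length $\frac1d$ of $I_k$, which is a contradiction. Applying this with $y=x$ and $z=T(x)$ again gives $T(x)=x$.
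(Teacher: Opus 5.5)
Your proof is correct and is essentially the paper's nested-interval argument. Your $g^{n}(I_k)$ is exactly the paper's $A_{n-1}$, the set of points whose first $n$ iterates stay in $I_k$. Phrasing it through the inverse branch of $T|_{I_k}$ onto $S^1\setminus\{p\}$ makes rigorous the paper's brief claim that $A_0$ is one interval rather than two, and your Step 1 supplies the connectedness of $I_k$ that the paper takes for granted.

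One caveat concerns your fallback expansion argument. As written it silently needs $p\notin I_k$, and without that the distance inside $I_k$ need not grow. For example, with $d=2$ and leaf $\{\frac14,\frac34\}$, the points $\frac13$ and $\frac23$ both stay in $(\frac14,\frac34)$ and swap under $T$. Your main argument does not rely on the fallback, so this does not affect the proof.
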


\begin{proof}
Define $A_0 = \{a \in I_k \mid T(a) \in I_k \}$. Since $len(I_k) = \frac{1}{d}$ we have $len(A_0) = \frac{1}{d^2}$. Also, since $I_k$ is a connected interval, $A_0$ is either two intervals or one. As $T(\partial I_k) \cap I_k = \emptyset$ the former cannot happen and so $A_0$ is a connected interval.

Inductively $A_{j+1} = \{ a \in A_j \mid T(a) \in A_j \}$. By the same reasoning as above, $A_{j+1}$ is a connected interval and $len(A_{j+1}) = \frac{1}{d^{j+2}}$. Thus by the usual nested interval argument, we have $A_\infty := \bigcap_{j} \overline{A_j}$ is a singleton, and hence $A_\infty = \{ x \}$.

Since $T(x)$ also has kneading sequence $(\overline{k})$ and there is only one such element in $I_k$, it follows that $T(x) = x$ as claimed.
\end{proof}

\begin{theorem}
\label{eventhm}
Let $d$ be even. There are $d-1$ elements in $PM(d)$ which maximize core entropy. All are equivalent under $S(z) = z + \frac{1}{d-1} \mod 1$.

% $M_{max} = \{ (\frac{k}{d}, \frac{d-k}{d}) \}_{k=1}^{d/2-1} \cup \{ (\frac{k}{d} + \frac{1}{2d}, \frac{d-k}{d} - \frac{1}{2d}) \}_{k=0}^{d/2-1}$

\end{theorem}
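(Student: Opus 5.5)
We argue in three steps: show that a maximizer is determined by a small amount of combinatorial data, construct the candidates explicitly, and identify them under $S$. By Theorem \ref{totsepthm} and the discussion opening this section, $h(M)=\log(d)$ forces $M\in\Pi(1,\dots,1)$ with $d-1$ ideal lines, and forces $M$ to be totally separated.

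First we show that the images $T(L_j)$ are forced. By Proposition \ref{evenprop}, every point $T(L_j)$ has all further iterates in $I_2$, so $T(L_j)$ has kneading sequence beginning $1,\overline{2}$ or $\overline{2}$. In particular the points $T^2(L_j)$ all lie in $I_2$ and have kneading sequence $(\overline{2})$. We would like to apply Lemma \ref{kneading-lemma-1} with $k=2$, which requires checking $T(\partial I_2)\cap I_2=\emptyset$. The endpoints of $I_2$ are endpoints of the leaves $L_1$ and $L_{d-1}$ (as $I_2$ lies between them), so $T(\partial I_2)\subset\{T(L_1),T(L_{d-1})\}\subset I_1$, and the hypothesis holds. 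Hence $I_2$ contains a unique point $p$ with kneading sequence $(\overline 2)$, and $p$ is a fixed point of $T$, so $p=\frac{i}{d-1}$ for some $i$. Therefore $T^2(L_j)=p$ for all $j$. The points $T(L_{2j})$ lie in $I_2$ with sequence $(\overline 2)$, so they also equal $p$. The points $T(L_{2j+1})$ are preimages of $p$ lying in $I_1$. Totally separatedness of $(x,y)\in I_1\times I_2$, which maps to $(T(x),T(L_1))$, forces the same $q\in I_1$ for all odd leaves. Concretely, $T(L_{2j})=p$ and $T(L_{\mathrm{odd}})=q$ with $T(q)=p$ and $q\neq p$.

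Next we reconstruct $M$ from this data. Each leaf $L_{2j}$ is a chord joining two of the $d$ preimages of $p$, and each leaf $L_{2j+1}$ is a chord joining two preimages of $q$. There are $d/2$ odd-indexed leaves and $d/2-1$ even-indexed leaves. The $d-1$ chords must be disjoint, must be nested in the order $L_1,\dots,L_{d-1}$ so that a single pair crosses all of them, and must cut the circle into arcs $I_j$ of length $1/d$. Because the leaves are nested and alternate in type, the preimages of $p$ and of $q$ (each spaced $1/d$ apart) have to pair off symmetrically around a common axis. This should force $M$ to be the configuration in the commented formula, rotated so that $p=\frac{i}{d-1}$. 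We also need to determine $q$ from $p$. The requirement that $q\in I_1$ while $p\in I_2$, together with $T(\partial I_2)\subset I_1$ and the lengths being $1/d$, should pin $q$ down to the preimage of $p$ that is diametrically placed, namely $q=p+\tfrac12-\tfrac1{2d}$ up to the symmetry. This gives exactly one maximizer for each choice of the fixed point $p$, and $T$ has exactly $d-1$ fixed points.

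Finally, the candidates are maximizers and are related by $S$. For the explicit $M$ with $p=0$, we check that the vertex $(T(L_1),T(L_2))=(q,p)$ is totally separated. The pair $\{p,q\}$ is separated by all leaves and maps to pairs of the form $\{p,q\}$ or $\{p,p\}$-type pairs drawn from the same finite set, so the closure condition holds. Theorem \ref{totsepthm} then gives $h=\log d$. Since $S$ permutes the fixed points $\frac{i}{d-1}$ transitively and preserves entropy by Lemma \ref{symlemma}, the $d-1$ maximizers are the $S$-orbit of a single one.

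The main obstacle is the rigidity step: showing that the nesting and alternation constraints admit exactly one chord configuration for each $p$. In particular we must rule out other choices of preimage $q$. We would handle this by counting arc lengths: each complementary region must have total length $1/d$, and we would track which preimages of $p$ and $q$ fall into the arcs $I_1$ and $I_2$.
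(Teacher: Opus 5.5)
Your first step matches the paper's: Proposition \ref{evenprop} and Lemma \ref{kneading-lemma-1} give $T(L_{2j})=p$ and $T(L_{2j+1})=q$ with $T(q)=p$. The gap is the step that actually produces the count, namely that there is at most one maximizer for each fixed point $p$. You say the configuration ``should'' be forced and defer it as the main obstacle, and the one concrete claim you make there is false. The value $q=p+\tfrac12-\tfrac1{2d}$ is not a preimage of $p$, since $d\bigl(p+\tfrac12-\tfrac1{2d}\bigr)\equiv p-\tfrac12 \pmod 1$. This contradicts your own $T(q)=p$. In the commented configuration with $p=0$ one has $T(L_1)=T(\tfrac1{2d})=\tfrac12$, so $q=p+\tfrac12$; perhaps you conflated $q$ with the leaf endpoint $\tfrac12-\tfrac1{2d}$. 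Without the rigidity step you only have the lower bound of $d-1$ maximizers, coming from your explicit example together with $S$.

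The missing idea is a counting argument, which is how the paper proceeds.
\begin{itemize}
\item Since $p\in I_2$ and $q\in I_1$ lie in open complementary arcs, neither is an endpoint of a leaf.
\item Hence the $d/2$ odd leaves use all $d$ preimages of $q$, and the $d/2-1$ even leaves use exactly the $d-2$ preimages of $p$ other than $p$ and $q$. So the endpoint set is completely determined by $(p,q)$.
\item Every leaf separates $p$ from $q$, so each arc of $S^1\setminus\{p,q\}$ contains exactly $d-1$ endpoints.
\item Write $q=p+j/d$. The preimages of $q$ are the points $p+k/d+j/d^2$. The arc from $p$ to $q$ therefore contains $j-1$ preimages of $p$ and $j$ preimages of $q$. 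Hence $2j-1=d-1$, so $j=d/2$ and $q=p+\tfrac12$.
\item Disjoint chords that all separate $p$ from $q$ must join the $i$-th endpoint on one side to the $i$-th endpoint on the other. So $M$ is unique for each $p$.
\end{itemize}

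Two smaller points. First, $I_2$ is bounded by a single leaf ($L_1$ or $L_{d-1}$, depending on orientation), not by both; your conclusion $T(\partial I_2)\subset I_1$ still holds. Second, all the $T(L_{2j+1})$ coincide simply because the open arc $I_1$ has length $1/d$ and so contains at most one preimage of $p$. The edge to $(T(x),T(L_1))$ does not establish this.

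Your verification that the candidate is totally separated is fine, and the paper leaves it implicit. In fact all $d$ edges from $\{p,q\}$ return to $\{p,q\}$, so no $\{p,p\}$-type pairs occur.
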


\begin{proof}
Let $M = \{ L_1, ..., L_{d-1} \} \in PM(d)$ such that $h(M) = \log(d)$. Then by Proposition \ref{evenprop}, we know that $\Sigma_{M}(L_{2j+1}) = (1, \overline{2})$ and that $\Sigma_M(L_{2j}) = (\overline{2})$. Then by Lemma \ref{kneading-lemma-1} we know that $T(L_{2j}) = p$ for some fixed point $p$. Moreover, since $\Sigma_M(T(L_{2j+1})) = (1, \overline{2})$ it follows $T(L_{2j+1})$ is also a fixed point of $T$. As it belongs to the same interval as $p$ and each interval contains at most $1$ fixed point, we have $p = T^2(L_{2j+1})$. Also, as $T(L_{2j+1}) \in I_1$ we have each $T(L_{2j+1})$ is the same preimage of $p$ for all $j$.

In order for $T(L_{2j})$ to be totally separated from $T(L_{2j+1})$ we must have that exactly half of the endpoints of the leaves $L_1, ..., L_{d-1}$ lie in one component of $S^1 \setminus \{ T(L_{2j}), T(L_{2j+1}) \}$ and that the other half lie in the other component. There are exactly $2(d-1)$ endpoints in total, all of which are preimages of $T(L_{2j})$ or preimages of $T(L_{2j+1})$. Notice that $T(L_{2j+1})$ and $T(L_{2j})$ are both preimages of $p$ neither of which is an endpoint. Thus there are $d-2$ preimages of $T(L_{2j+1})$ to use for endpoints and $d$ preimages of $T(L_{2j+1})$ to use for preimages totally to $2d-2$. Hence each preimage must be used as an endpoint. 

As preimages of $T$ are all equidistributed along $S^1$, the only way to choose $T(L_{2j})$ so that the endpoints can be made into a lamination with $d-1$ leaves which all separate $T(L_{2j})$ from $T(L_{2j+1})$ is if $T(L_{2j+1}) = p + \frac{1}{2}$. Thus each such lamination $M$ is determined uniquely by a choice of fixed point $p$. There are $d-1$ fixed point under multiplication by $d$ and hence $d-1$ maxima. And clearly all laminations described this way are identified under $S(z)$ hence there is only one equivalence class.
\end{proof}

Now we move on to the odd case. The proof strategy is essentially the same as before however the details become more technical. Instead of having one type of kneading sequence for each leaf it turns out there are two cases, one will be the same as the even case and correspond to fixed points of $T$ while the other will correspond to points of period $2$.

% odd case
\begin{proposition}
\label{oddprop}
Let $d$ be odd and suppose $M \in PM(d)$ such that $h(M) = \log(d)$. Write $M = \{ L_1, ..., L_{d-1} \}$ as above. Then either $\Sigma_{M}(L_{2j+1}) = (\overline{1})$ and $\Sigma_{M}(L_{2j}) = (\overline{2})$ or $\Sigma_{M}(L_{2j+1}) = (\overline{1, 2})$ and $\Sigma_{M}(L_{2j}) = (\overline{2, 1})$.
\end{proposition}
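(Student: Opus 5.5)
We follow the scheme of Proposition \ref{evenprop}. By Theorem \ref{totsepthm}, $M$ is totally separated. By Lemma \ref{totseppairs}, each consecutive pair $(T(L_j),T(L_{j+1}))$ is totally separated. So $T(L_j)$ and $T(L_{j+1})$ lie in the two distinct intervals $I_1,I_2$, which are the only components separated by all $d-1$ leaves. With the labelling convention $T(L_1)\in I_1$, this gives $\sigma_{M,1}(L_{2j+1})=1$ and $\sigma_{M,1}(L_{2j})=2$. The new feature for odd $d$ is that $d-1$ is even. Hence $T(L_{d-1})$ lies in $I_2$, not $I_1$: the first and last leaves have images in different intervals.

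Next we propagate along a totally separated pair $(x,y)\in I_1\times I_2$ whose separation vector is $(L_1,\dots,L_{d-1})$ or its reverse. Its two end edges go to $(T(x),T(L_1))$ and $(T(L_{d-1}),T(y))$, or to $(T(x),T(L_{d-1}))$ and $(T(L_1),T(y))$. Each of these vertices is totally separated, so its two entries lie in opposite intervals. In the first case $T(x)\in I_2$ and $T(y)\in I_1$: the pair swaps intervals. In the second case $T(x)\in I_1$ and $T(y)\in I_2$: the pair stays put.

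The key step is to show that this behaviour is the same at every time. We track the pair $(T^n(L_1),T^n(L_2))$, obtained by iterating the edges from $(T(L_1),T(L_2))$ through the end edges. From a pair $(T^n(L_1),T^n(L_2))$, the end edges give pairs $(T^{n+1}(L_1),T(L_{1}\text{ or }d-1))$. To reach $(T^{n+1}(L_1),T^{n+1}(L_2))$ I would instead use the orientation/ordering of the circle, as follows. The pair $(T^n(L_j),T^n(L_{j+1}))$ is totally separated, so either $T^n(L_j)$ lies in $I_1$ for every odd $j$ and in $I_2$ for every even $j$, or the reverse. This parity pattern of the sequence $(T^n(L_1),\dots,T^n(L_{d-1}))$ is either preserved or flipped at each step.

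I expect the main obstacle to be showing that the choice (preserve or flip) is the same at every step. The dichotomy is decided by the order in which the leaves separate $I_1$ from $I_2$ when read from $I_1$, which is a fixed property of $M$. The points $T^n(L_j)$ for odd $j$ all lie on the same side, by the same argument as for the first iterate. So the orientation of the separation vector for a pair (point of $I_1$, point of $I_2$) does not depend on $n$. Two outcomes follow.

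\textbf{Fixed orientation $(L_1,\dots,L_{d-1})$ read from $I_1$.} The point in $I_1$ is sent next to $T(L_1)$, so it moves to $I_2$. Every step then flips the pattern, giving $\Sigma_M(L_{2j+1})=(\overline{1,2})$ and $\Sigma_M(L_{2j})=(\overline{2,1})$.

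\textbf{Reverse orientation.} Every step preserves the pattern, giving $(\overline{1})$ and $(\overline{2})$.

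To make the $n$-independence rigorous, I would argue by induction on $n$. Applying the edge computation above to $(x,y)=(T^n(L_1),T^n(L_2))$ places $T^{n+1}(L_1)$ and $T^{n+1}(L_2)$ consistently. Using that pairs $(T^n(L_1),T^n(L_j))$ are reached by totally separated paths then forces all odd-indexed iterates to one side.
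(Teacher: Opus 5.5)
Your case split (orientation of the separation vector read from $I_1$) and your end-edge computation are the same as in the paper's proof. Your remark that this orientation is a fixed property of $M$ is what the paper's ``Repeating'' uses without saying so. The gap is in the induction you propose to make this rigorous.

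\textbf{The gap.} You assert that $(T^n(L_j),T^n(L_{j+1}))$ is totally separated for every $n$. You justify this by saying the pairs $(T^n(L_1),T^n(L_j))$ are reached by totally separated paths. Neither claim is available a priori. Every out-edge of a vertex separated by at least one leaf goes to a vertex containing a point of the form $T(L_k)$. So for $n\geqslant 2$, the vertex $\{T^n(L_1),T^n(L_j)\}$ is reachable only if one of its entries already coincides with some $T(L_k)$. That coincidence does hold a posteriori, since the iterates turn out to be fixed or period-two points, but it is essentially what you are trying to prove. The appeal to ``the orientation/ordering of the circle'' does not supply it. The same unjustified claim is behind ``the points $T^n(L_j)$ for odd $j$ all lie on the same side''.

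\textbf{The fix.} Track single points instead of pairs, using your own edge computation.
\begin{enumerate}
\item Suppose $u\in I_i$ is an entry of a totally separated vertex. Let $L_{a(i)}$ be the leaf bounding $I_i$; by your fixed-orientation observation it depends only on $i$.
\item The end edge at $u$ goes to the totally separated vertex $\{T(u),T(L_{a(i)})\}$. Hence $T(u)$ lies in the interval not containing $T(L_{a(i)})$, and $T(u)$ is again an entry of a totally separated vertex.
\item This defines a fixed map $\phi$ on $\{I_1,I_2\}$. Since $T(L_1)\in I_1$ and $T(L_{d-1})\in I_2$, $\phi$ is the swap if $L_{a(1)}=L_1$ and the identity if $L_{a(1)}=L_{d-1}$.
\item Apply this to $u=T(L_j)$, which lies in $I_1$ for odd $j$ and in $I_2$ for even $j$. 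Then $T^n(L_j)$ lies in $\phi^{n-1}$ of the interval containing $T(L_j)$, which gives both kneading sequences at once.
\end{enumerate}
No statement about pairs of $n$-th iterates is needed. With this replacement your argument is the paper's.
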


\begin{proof}
As in the even case, if $M$ is totally separated then by Lemma \ref{totseppairs} each consecutive pair $(T(L_j), T(L_{j+1}))$ is totally separated. Again we denote by $I_1$ the unique interval for which each $T(L_{2j+1})$ lives in and $I_2$ the interval for which $T(L_{2j})$ lies.

Give  a pair $(x, y) \in I_1 \times I_2$ there are two cases for the separation vector. Since $T(L_1) \in I_1$ and $T(L_{d-1}) \in I_2$ these cases are no longer equal as in the $d$ even case and must be handled separately.

Case 1: Suppose the separation vector of $(x, y) = (L_1, L_2, ..., L_{d-1})$. Then we have edges to $(T(x), T(L_1))$ and to $(T(y), T(L_{d-1})$. For these to be totally separated we must have $T(x) \in I_2$ and $T(y) \in I_1$. Thus if $(x, y) \in I_1 \times I_2$ then $(T(x), T(y)) \in I_2 \times I_1$. Repeating, we see $T^{2n}(x), T^{2n+1}(y) \in I_2$ and $T^{2n+1}(x), T^{2n}(y) \in I_1$. This implies $\Sigma_M(L_{2j}) = (\overline{2, 1})$ and $\Sigma_M(L_{2j+1}) = (\overline{1, 2})$.

Case 2. Suppose the separation vector of $(x, y) = (L_{d-1}, L_{d-2}, ..., L_1)$. Then we have edges to $(T(x), T(L_{d-1})$ and $(T(y), T(L_{1})$. Then for these to be totally separated we must have $T(x) \in I_1)$ and $T(y) \in I_2$. Thus, if $(x, y) \in I_1 \times I_2$ then $(T(x), T(y)) \in I_1 \times I_2$ also. Repeating, we see $T^n(x) \in I_1$ and $T^n(y) \in I_2$. This implies $\Sigma_M(L_{2j}) = (\overline{2})$ and $\Sigma_M(L_{2j+1}) = (\overline{1})$.
\end{proof}

\begin{lemma}
\label{kneading-lemma-2}
Suppose that $M = ( L_1, ..., L_{d-1} ) \in PM(d)$ with $P = \{ I_1, ..., I_{d-1} \}$ the induced partition. Suppose that $x \in I_1$ has kneading sequence $(\overline{1, 2})$ and that $T(\partial I_1) \in I_1$, $T(\partial I_2) \in I_2$. Then $T^2(x) = x$.
\end{lemma}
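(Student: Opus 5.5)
I will follow the nested interval strategy of Lemma \ref{kneading-lemma-1}, now tracking points whose itinerary alternates between $I_1$ and $I_2$. Set
\[
B_0 = \{ a \in I_1 \mid T(a) \in I_2 \},
\]
and inductively let $B_{j+1}$ be the set of $a \in B_j$ such that $T^{2}(a)$ lies in $B_j$, or more carefully such that the itinerary of $a$ agrees with $(1,2,1,2,\dots)$ for one more step. The plan is to show that each such set is a single connected interval and that its length shrinks by a factor $d$ at each step. Then $\bigcap_j \overline{B_j}$ is a single point, necessarily $x$.

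The first step is to understand the sets $B_j$. Since $I_1$ has length $\frac{1}{d}$ and the endpoints of $I_1$ are identified by $T$ (they are endpoints of leaves), $T$ maps $I_1$ onto the whole circle, essentially injectively. Hence $T|_{I_1}$ is a homeomorphism from $I_1$ onto $S^1$ minus one point, namely $T(\partial I_1)$. The preimage of $I_2$ under this map is connected unless the cut point $T(\partial I_1)$ lies inside $I_2$. The hypothesis $T(\partial I_1) \in I_1$ rules this out, so $B_0$ is a single interval of length $\frac{1}{d^2}$. Likewise, $T$ restricted to $I_2$ is a homeomorphism onto $S^1$ minus the point $T(\partial I_2)$. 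Since $T(\partial I_2) \in I_2$, that point is not in $I_1$, so the set of points of $I_2$ mapping into $I_1$, or into any subinterval of $I_1$, is connected.

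Next I will induct, alternating between the two hypotheses. Define $C_n$ to be the set of points of $I_1$ whose first $n+1$ itinerary symbols are $1,2,1,\dots$, and $D_n$ to be the analogous set in $I_2$ for itineraries starting with $2,1,2,\dots$. Then
\[
C_{n+1} = (T|_{I_1})^{-1}(D_n), \qquad D_{n+1} = (T|_{I_2})^{-1}(C_n).
\]
By induction $D_n \subset I_2$ is an interval, and it avoids the cut point $T(\partial I_1) \in I_1$. So its preimage under $T|_{I_1}$ is a single interval of length $\frac{1}{d}\,\mathrm{len}(D_n)$. The symmetric statement for $C_n$ uses $T(\partial I_2) \in I_2$, hence $\notin I_1$. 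Therefore $\mathrm{len}(C_n) = d^{-(n+1)}$, the closures $\overline{C_n}$ are nested, and $\bigcap_n \overline{C_n}$ is a singleton. Because $x$ has kneading sequence $(\overline{1,2})$, this singleton is $\{x\}$. The same uniqueness holds in $I_2$ for the sequence $(\overline{2,1})$.

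Finally, $T^2(x)$ lies in $I_1$, because the third symbol of $x$ is $1$, and $T^2(x)$ again has kneading sequence $(\overline{1,2})$, since shifting by two preserves the sequence. By uniqueness, $T^2(x) = x$. The main obstacle I expect is the connectivity bookkeeping at the boundary. One issue is being careful that $T|_{I_k}$ is a bijection onto the circle minus one point, given that $I_k$ is a union of arcs whose endpoints are identified by leaves. The other is checking that intervals touching $\partial I_k$ do not split, which is handled by taking closures as in Lemma \ref{kneading-lemma-1}.
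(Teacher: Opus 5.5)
Your proof is correct and follows essentially the same route as the paper. Both arguments pull back alternately under the inverse branches of $T$ on $I_1$ and $I_2$, use $T(\partial I_1)\in I_1$ and $T(\partial I_2)\in I_2$ to keep each pullback a single interval shrinking by a factor of $d$, and conclude that $T^2(x)=x$ because $x$ is the unique point with itinerary $1,2,1,2,\dots$. Your explicit recursion $C_{n+1}=(T|_{I_1})^{-1}(D_n)$, $D_{n+1}=(T|_{I_2})^{-1}(C_n)$, with the itinerary read as starting at $x$ itself, is actually a cleaner version than the paper's, whose parity labelling of its sets $P_j$ is garbled.
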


\begin{proof}
Consider $P_1 = \{ z \in I_2 \mid T(z) \in I_1 \}$. Since $T(\partial I_2) \in I_2$ this set is an interval of length $\leqslant \frac{1}{d^2}$. Define $P_2 = \{ z \in I_1 \mid T(z) \in P_1 \}$. Then since $T(I_1) \in I_1$ this is an interval of length $\leqslant \frac{1}{d^3}$. Repeating we construct sets $P_j$ so that the even and odd subsequences are nested subsets of $I_2$ and $I_1$ respectively. Then the intersection of the even and odds are singletons which, by construction, $T$ maps to each other. Since $x$ must be in the intersection of the $P_{2j+1}$ this proves the claim.
\end{proof}

The above shows that the odd degree maxima come in two types. On consisting of two fixed points and the other of two period two points. The analysis of the fixed points is similar to the even case, the only difference being a single primitive major will require two fixed points rather than one fixed point and its preimage. The period two case is more complicated. If we include fixed points then there are $d^2-1$ points of period two under multiplication by $d$. Most points are not suitable for constructing primitive majors since we will want the point and its image to be totally separated. That is, we need a pair of period two points $x$ and $y = T(x)$ so that exactly half of the preimages of $x$ and $y$ (excluding $x$ and $y$) lie in each component of $S^1 \setminus \{ x, y \}$. Thus we classify all such pairs:

\begin{lemma}
Suppose $d \geqslant 3$ is odd. Let $x \in (0, \frac{1}{d})$ have period two and define $y = T(x)$. Suppose that exactly half of the preimages of $x$ and $y$ (excluding $x$ and $y$) lie in each component of $S^1 \setminus \{ x, y \}$. Then $x = \frac{n+1}{d^2-1} = \frac{1}{2(d-1)}$ where $d = 2n + 1$. Moreover, we have $y - x = \frac{1}{2}$.
\end{lemma}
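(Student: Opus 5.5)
The plan is to make everything explicit. Points of period dividing two under $T(x)=dx \bmod 1$ are exactly the points $x=\frac{k}{d^2-1}$, since $T^2(x)=x$ means $(d^2-1)x\in\mathbb{Z}$. The condition $x\in(0,\frac1d)$ forces $0<k<\frac{d^2-1}{d}=d-\frac1d$, so $1\leqslant k\leqslant d-1$. For these $k$ we have $dk<d^2-1$, so no reduction mod $1$ is needed and
\[
y=T(x)=\frac{dk}{d^2-1}, \qquad y-x=\frac{(d-1)k}{d^2-1}=\frac{k}{d+1}.
\]
This already gives the last claim once $k$ is identified. Note that $y\in(x,1)$, so the two components of $S^1\setminus\{x,y\}$ are the open arc $(x,y)$ and its complement.

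Next I will list the preimages explicitly:
\[
T^{-1}(x)=\Big\{\tfrac{x+j}{d}\Big\}_{j=0}^{d-1}, \qquad T^{-1}(y)=\Big\{\tfrac{y+j}{d}\Big\}_{j=0}^{d-1}=\Big\{x+\tfrac{j}{d}\Big\}_{j=0}^{d-1},
\]
where the second form uses $y/d=x$. The point $x$ is the preimage of $y$ with $j=0$. Using $dy\equiv x$, the point $y$ is the preimage of $x$ with index $j=k$; indeed $\frac{x+k}{d}=\frac{k(1+d^2-1)}{d(d^2-1)}=y$. Removing these two points leaves $2d-2$ points, and the hypothesis says exactly $d-1$ of them lie in the open arc $(x,y)$.

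Now I count the points in $(x,y)$. A preimage $x+\frac jd$ of $y$ lies in the arc exactly when $0<\frac jd<\frac{k}{d+1}$. This means $1\leqslant j<k-\frac{k}{d+1}$, and since $0<\frac{k}{d+1}<1$ this is $1\leqslant j\leqslant k-1$, giving $k-1$ points. A preimage $\frac{x+j}{d}$ of $x$ satisfies $\frac{x+j}{d}>x$ exactly when $j>(d-1)x=\frac{k}{d+1}$, i.e. $j\geqslant 1$. It satisfies $\frac{x+j}{d}<y=dx$ exactly when $j<(d^2-1)x=k$. That again gives $k-1$ points, with $j=k$ being $y$ itself, which is excluded.

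So the arc $(x,y)$ contains $2(k-1)$ of the relevant points. Setting $2(k-1)=d-1$ gives $k=\frac{d+1}{2}=n+1$. Hence $x=\frac{n+1}{d^2-1}=\frac{(d+1)/2}{(d-1)(d+1)}=\frac{1}{2(d-1)}$ and $y-x=\frac{k}{d+1}=\frac12$. The parity of $d$ is used exactly here, to make $k$ an integer.

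The main obstacle is the bookkeeping at the boundary of the counting step. I need to be sure that no preimage lands exactly on $x$ or $y$ other than the two identified ones, and that the strict inequalities give integer ranges with the stated endpoints. Both follow from $0<\frac{k}{d+1}<1$ and from the fact that the preimages of $x$ and of $y$ are disjoint, because $x\neq y$. I will check these explicitly rather than rely on pictures.
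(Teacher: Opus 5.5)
Your proof is correct, and it takes a genuinely different route from the paper's.

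The paper never locates the preimages individually. It uses only the fact that preimages are spaced $\frac1d$ apart to bound the length of each component of $S^1\setminus\{x,y\}$ by $\frac{(d-1)/2+1}{d}$. It turns these two length bounds into $n+\frac12-\frac{1}{2d}\leqslant k\leqslant n+\frac32+\frac{1}{2d}$, and then notes that $n+1$ is the only integer in that window.

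You instead compute the count exactly. The arc $(x,y)$ contains $k-1$ preimages of $y$ and $k-1$ preimages of $x$, so the hypothesis becomes the linear equation $2(k-1)=d-1$.

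Your approach buys several things:
\begin{enumerate}
\item The paper's bound is phrased in terms of $\frac{d-1}{2}$ preimages of a single point in each component. You only need the combined count of $d-1$ in each component, and your computation shows the two families always contribute equally here.
\item It makes transparent that odd $d$ is exactly what makes $k$ an integer.
\item The converse (that $k=n+1$ satisfies the condition) comes for free.
\end{enumerate}

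The paper's route is shorter and uses only the spacing of preimages, at the cost of the integer squeeze.
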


\begin{proof}
Recall that points of period two (including fixed points) are of the form $\frac{k}{d^2-1}$. Thus we may assume $x$ has this form and need only show $k = n+1$.

As $x < \frac{1}{d}$ we know $y = T(x) = dx$. Then the lengths of the two components of $S^1 \setminus \{ x, y \}$ are $dx - x$ and $1 - (dx - x)$ respectively. 

Since $y$ is a preimage of $x$ we know that each distance $\frac{1}{d}$ we go from $y$ we will reach another preimage. Thus to have $\frac{d-1}{2}$ preimages of $y$ in each component, we know the length of each component is at most $\frac{\frac{d-1}{2} + 1}{d}$. Thus:

\begin{align*}
dx -x &\leqslant \frac{\frac{d-1}{2} + 1}{d}.
\end{align*}
Substituting in $x = \frac{k}{d^2-1}$ gives:
\begin{align*}
\frac{(d-1)k}{d^2-1} &\leqslant \frac{1}{2} + \frac{1}{2d},
\end{align*}
and hence:
\begin{align*}
k &\leqslant \frac{d+1}{2} + \frac{1}{2} + \frac{1}{2d} \\
&= n + 1 + \frac{1}{2} + \frac{1}{4n+2}.
\end{align*}

Similarly we have:
\begin{align*}
1 - (dx-x) &\leqslant \frac{\frac{d-1}{2} + 1}{d}.
\end{align*}
So substituting $x = \frac{k}{d^2-1}$ gives:
\begin{align*}
1 - \frac{1}{2} - \frac{1}{2d} &\leqslant \frac{(d-1)k}{d^2-1}.
\end{align*}
Rearanging for $k$ gives:
\begin{align*}
k &\geqslant \frac{d+1}{2} - \frac{1}{2} - \frac{1}{2d} \\
&= n + \frac{1}{2} - \frac{1}{4n+2}.
\end{align*}

Putting these together we have:

\[
n + \frac{1}{2} - \frac{1}{4n+2} \leqslant k \leqslant n + 1 + \frac{1}{2} + \frac{1}{4n+2}.
\]

Notice that as $d = 2n+1 \geqslant 3$ we have $n \geqslant 1$. Thus, since $k$ must be an integer, this implies $k = n+1$ as claimed.
\end{proof}

\begin{theorem}
\label{oddthm}
Let $d$ be odd. There are $d-1$ laminations in $PM(d)$ which maximize core entropy. The maxima fall into two distinct equivalence classes under $S(z) = z + \frac{1}{d-1} \mod 1$.
\end{theorem}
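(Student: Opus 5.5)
Take $M = \{ L_1, ..., L_{d-1} \}$ with $h(M) = \log(d)$. By Theorem \ref{totsepthm}, $M$ is totally separated. Order the leaves and label $I_1, I_2$ as at the start of this section. Proposition \ref{oddprop} then splits the argument into two cases: the fixed point case $\Sigma_M(L_{2j+1}) = (\overline{1})$, $\Sigma_M(L_{2j}) = (\overline{2})$, and the period two case $\Sigma_M(L_{2j+1}) = (\overline{1,2})$, $\Sigma_M(L_{2j}) = (\overline{2,1})$. In each case I will show the points $T(L_j)$ are pinned down to a specific pair of points. I will then show the leaves are forced by the requirement that every preimage be used as an endpoint, counted exactly as in the proof of Theorem \ref{eventhm}. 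Finally I count the solutions and sort them into $S$-orbits.

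\emph{Fixed point case.} First I verify the hypothesis $T(\partial I_k) \cap I_k = \emptyset$ of Lemma \ref{kneading-lemma-1}. The endpoints of $I_k$ are leaf endpoints, and their images are the points $T(L_j)$. Those images lie in the interior of $I_1$ or $I_2$ but are not boundary points, and I expect total separation to rule out any of them landing in the same $I_k$ they bound. Granting this, Lemma \ref{kneading-lemma-1} gives $T(L_{2j+1}) = p$ and $T(L_{2j}) = q$ with $p, q$ fixed points.

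Next I count endpoints. There are $2(d-1)$ endpoints. They are preimages of $p$ or $q$ other than $p, q$ themselves, so there are $(d-1)+(d-1)$ available and every such preimage must be used. Requiring $(d-1)/2$ preimages of each point on each side of the chord $\{p,q\}$, and using equidistribution of preimages, should force $q = p + \tfrac12$. Fixed points are $\tfrac{k}{d-1}$, so for odd $d$ this means $q = p + \tfrac{(d-1)/2}{d-1}$, which is again a fixed point. This gives $d-1$ ordered choices of $p$. The unordered pairs $\{p, p+\tfrac12\}$ number only $(d-1)/2$, so I expect the ordering, fixed by which point receives the odd-indexed leaves, to account for distinct majors. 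Alternatively, for each pair the leaves may be forced in exactly one way, and I will recheck the count of majors against the labeling.

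\emph{Period two case.} Here I check the hypotheses of Lemma \ref{kneading-lemma-2} in the same manner. This gives that $x = T(L_1)$ has period two and $y = T(x) = T(L_{2j})$. Using the symmetry $S$ (Lemma \ref{symlemma}), I normalize so that $x \in (0, \tfrac1d)$. The preceding lemma then forces $x = \tfrac{1}{2(d-1)}$ and $y = x + \tfrac12$. The $S$-orbit of this pair gives the second family.

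\emph{Counting and orbits.} Every point in the period two family is $\tfrac{1}{2(d-1)} + \tfrac{k}{d-1}$, which is never a fixed point. So $S$, which preserves the set of fixed points, cannot carry a major of one family to a major of the other, and there are exactly two classes. The total must come to $d-1$, so I expect $(d-1)/2$ majors in each family, each family forming a single $S$-orbit. That is consistent with $S$ having order $d-1$ and $S^{(d-1)/2}$ being $z \mapsto z+\tfrac12$, which swaps $p$ and $q$ and so stabilizes each major.

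Since the two families split the count evenly, I would settle the fixed point count as $(d-1)/2$ by showing that the major is determined by the unordered pair. The leaves join preimages across the diameter, and swapping the labels only reverses the ordering. Existence in both families follows from Theorem \ref{totsepthm}, once I check that the constructed configuration has a totally separated vertex such as $(p,q)$.

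The main obstacle is the leaf-matching step. I must show that, given the required endpoint set, there is exactly one way to pair the endpoints into $d-1$ disjoint leaves, each joining two points with the same image under $T$ and each separating $x$ from $y$. I would try nested pairing outward from the chord $\{x, y\}$: pair the closest preimages across the chord, then induct.
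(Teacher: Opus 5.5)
Your proposal is correct and follows essentially the paper's route: the case split of Proposition \ref{oddprop}, Lemmas \ref{kneading-lemma-1} and \ref{kneading-lemma-2}, endpoint counting with equidistribution and the period-two lemma to force an antipodal pair, and then $(d-1)/2$ majors per family. You also make explicit the leaf-pairing, existence and class-distinctness steps that the paper only asserts; your pairing step goes through because disjoint leaves that all cross the diameter must join the $i$-th closest endpoints to $p$ on its two sides, and reflection in that diameter preserves each class of preimages. One small fix, which the paper also needs: $S$ alone cannot always move $x$ into $(0,\frac{1}{d})$ (e.g.\ $d=5$, $x=\frac{5}{24}$), but one of $x, y$ always has an $S$-translate there, and the hypothesis of the period-two lemma is symmetric in $x$ and $y$.
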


\begin{proof}
Let $M = \{ L_1, ..., L_{d-1} \}$ have $h(M) = \log(d)$. Then by Proposition \ref{oddprop} there are two cases:

Case 1.  Suppose that $\Sigma_{M}(L_{2j}) = (\overline{2})$ and $\Sigma_{M}(L_{2j+1}) = (\overline{1})$.  Then by Lemma \ref{kneading-lemma-1} we know that there are fixed point $x \in I_1, y \in I_2$ so that $T(L_{2j}) = y$ and $T(L_{2j+1}) = x$. Each of these fixed points has $d-1$ preimages other then themselves and as the leaves $L_j$ consist of $2d-2$ endpoints on $S^1$, this means the endpoints of $L_{2j}$ are exactly the preimages of $y$ (except $y$) and the endpoints of $L_{2j+1}$ are exactly the preimages of $x$ (except $x$). As the endpoints are equidistributed on $S^1$, the only way that they can be connected to make a primitive major with $d-1$ leaves which separate $x$ from $y$ is if $x = y+\frac{1}{2}$. Thus the pairs of fixed points $\{ x, x+\frac{1}{2} \}$ each determine a maximum primitive major. Each maximum can be reached from another by successively adding $\frac{1}{d-1}$ to the leaves.

Case 2. Suppose that $\Sigma_{M}(L_{2j}) = (\overline{2, 1})$ and that $\Sigma_{M}(L_{2j+1}) = (\overline{1, 2})$. Then by Lemma \ref{kneading-lemma-2} we know that there are elements $x \in I_1$, $y \in I_2$ such that $T(x) = y, T(y) = x, T(L_{2j}) = y, T(L_{2j+1}) = x$. Notice that each of $x$ and $y$ has $d-1$ preimages excluding $x$ and $y$. Each of these must be an endpoint of the leaves $L_j$ and there are $2(d-1)$ such endpoints, hence the preimages equal the set of endpoints.

Suppose further that $x \in (0, \frac{1}{d})$. Since $x$ and $y$ are separated by all the leaves $L_j$, we know from the previous lemma that $x = \frac{1}{2(d-1)}$ and $y = x + \frac{1}{2}$.

If $x$ is not in $(0, \frac{1}{d})$ then we can add $\frac{1}{d-1}$ some number of times to the leaves of $M$ to get such a primitive major. Hence we have $x = \frac{1}{2(d-1)} + \frac{k}{d-1}$ and $y = x + \frac{1}{2}$.

Finally, note that in case 1 there are $d-1$ fixed points of $T$ and so $\frac{d-1}{2}$ maxima. In case 2, there was one maxima assuming $x \in (0, 1/d)$. We can apply $S$ $\frac{d-1}{2}$ times to produce the rest of the maxima, hence there are $\frac{d-1}{2}$ maxima here. Summing we obtain $d-1$ possible maxima.
\end{proof}

Finally, by combining Theorems \ref{eventhm} and \ref{oddthm} we obtain Theorem A.

\section{Maxima on the Unicritical Stratum}

Here we restrict our focus to the unicritical stratum of $PM(d)$ for $d \geqslant 3$. Our main result is that the set of maxima is a Cantor set.

Recall that $\Pi(d)$, the unicritical stratum of $PM(d)$, consists of one leaf $M_a = \{ (a, a + 1/d, a+2/d, ..., a + (d-1)/d) \}$. This allows us to study $\Pi(d)$ with only $a \in [0, 1/d]$. By Lemma \ref{symlemma} we have that $h(M_{a}) = h(M_{a + \frac{1}{d-1}})$. As $a$ runs through $[0, \frac{1}{d(d-1)}]$, $a + \frac{d-1}{d} + \frac{1}{d-1}$ will run through $[\frac{1}{d(d-1)}, \frac{2}{d(d-1)}]$. Thus, by repeatedly adding $\frac{1}{d-1}$ we can relate any $M_a$ with $a \in [0, \frac{1}{d(d-1)}]$ to the rest of the primitive majors in $[\frac{1}{d(d-1)}, \frac{1}{d}]$. Therefore we can restrict further to $M_a$ for $a \in [0, \frac{1}{d(d-1)}]$.

\begin{lemma}
\label{cantorkneading}
$h(M_a) = \log(2)$ if and only if $\Sigma_{M_a}(a) \in \{ A_1 \} \times \{ A_2, ..., A_d \}^{\mathbb{N}}$ for some distinct choices of $A_j \in \{0, 1,  ..., d-1 \}$.
\end{lemma}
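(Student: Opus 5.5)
By Theorem \ref{totsepthm}, since $M_a$ has a single leaf ($r=1$), we have $h(M_a)=\log(2)$ if and only if $M_a$ is totally separated. So the plan is to translate total separation of $G(M_a)$ into a condition on the kneading sequence $\Sigma_{M_a}(a)$. Write $L$ for the unique leaf $(a, a+\frac{1}{d}, \dots, a+\frac{d-1}{d})$ and label the $d$ complementary arcs of $L$ by $0,1,\dots,d-1$, so that $\sigma_{M_a}$ takes values in $\{0,\dots,d-1\}$. Two points not on $L$ are separated by $L$ exactly when they lie in different arcs, since $r=1$ and the separation vector is either $(L)$ or $\emptyset$.

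The first step is to compute the part of $G(M_a)$ reachable from the vertex $v_0=(T(L),T^2(L))$. If $(T(L),T^n(L))$ is separated by $L$, its outgoing edges go to $(T^2(L),T(L))$ and $(T(L),T^{n+1}(L))$. By induction, every vertex reachable from $v_0$ through separated vertices has the form $\{T(L),T^n(L)\}$ with $n\geqslant 2$. Conversely, every such vertex is reachable, because the chain $v_0\to(T(L),T^3(L))\to\cdots$ reaches each one.

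The two directions then go as follows.
\begin{enumerate}
\item Suppose $h(M_a)=\log 2$. Then $M_a$ is totally separated. Lemma \ref{totseppairs} shows that $v_0$ is totally separated, and Proposition \ref{definedkneading} shows that $\Sigma_{M_a}(a)$ is well defined. By the previous paragraph, $T^n(L)$ lies in a different arc from $T(L)$ for every $n\geqslant 2$. Setting $A_1=\sigma_{M_a,1}(a)$ and letting $A_2,\dots,A_d$ be the remaining $d-1$ symbols, we get $\sigma_{M_a,n}(a)\neq A_1$ for all $n\geqslant 2$. This is exactly $\Sigma_{M_a}(a)\in\{A_1\}\times\{A_2,\dots,A_d\}^{\mathbb{N}}$.
\item Suppose $\Sigma_{M_a}(a)$ is defined and lies in that set. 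Then no iterate $T^n(L)$ with $n\geqslant 1$ lands on $L$. Every pair $\{T(L),T^n(L)\}$ with $n\geqslant 2$ is separated by $L$, since the two points lie in arcs $A_1$ and $A_j$ with $j\neq 1$. By the closure computation above, the vertex $v_0$ satisfies both conditions in the definition of a totally separated vertex. Hence $M_a$ is totally separated, and Lemma \ref{totsep2} (or Theorem \ref{totsepthm}) gives $h(M_a)=\log 2$.
\end{enumerate}

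The main obstacle is the closure step: checking that the set $\{\{T(L),T^n(L)\}: n\geqslant 2\}$ is exactly the forward orbit of $v_0$ in $G(M_a)$. This requires care with the unordered-pair convention and with the edge to $(T^2(L),T(L))$, which is just $v_0$ again. A related subtlety is to make sure that "well defined kneading sequence" is part of the hypothesis in the reverse direction, so that no iterate of $L$ lands on an endpoint of $L$; otherwise the separation argument breaks down.
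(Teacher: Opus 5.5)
Your proposal is correct and follows essentially the same route as the paper. Both reduce to total separation via Theorem~\ref{totsepthm}, observe that the forward orbit of $(T(a),T^2(a))$ in $G(M_a)$ consists exactly of the pairs $(T(a),T^n(a))$ with $n\geqslant 2$, and note that these are all separated precisely when $T(a)$ lies in a different arc from every later iterate. The differences are minor: you cite Lemma~\ref{totseppairs} and Proposition~\ref{definedkneading} instead of re-deriving the path from an arbitrary totally separated vertex to $(T(a),T^2(a))$, and you state explicitly that well-definedness of $\Sigma_{M_a}(a)$ is part of the hypothesis in the converse, which the paper leaves implicit.
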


\begin{proof}
First suppose $h(M_a) = \log(2)$ and let $G$ be the Thurston graph of $M_a$. Since the entropy is maximal $G$ must have some totally separated vertex $v = (T^i(a), T^j(a))$. Since $v$ is totally separated it has an edge to $(T(a), T^{i+1}(a))$. This is also totally separated and so has an edge to $(T(a), T^2(a))$. Repeating this logic we have a path from $v$ to all vertices of the form $(T(a), T^n(a))$ for $n \geqslant 2$. The only way for these to be totally separated is if $T(a)$ lives in a different connected component of $S^1 \setminus \{ a, a+1/d, ..., a+(d-1)/d \}$ then all further iterates of $a$. Thus $\Sigma_{M_a}(a)$ has the desired form.

Conversely, suppose $\Sigma_{M_a}(a)$ is as above for some $a$. Then, as noted above, $T(a)$ lives in a different connected component of $S^1 \setminus \{ a, a+1/d, ..., a+(d-1)/d \}$ then all further iterates of $a$. Consider the vertex $(T(a), T^2(a))$. As $T(a)$ and $T^2(a)$ are separated, there will be two edges, one to $(T(a), T^2(a))$ and the other to $(T(a), T^3(a))$. Repeating we see there are edges only to pairs of the form $(T(a), T^n(a))$ for $n \geqslant 2$. Thus we have that $(T(a), T^2(a))$ is totally separated and so $h(M_a) = \log(2)$.
\end{proof}

\begin{remark}
Notice that since $a \in [0, \frac{1}{d(d-1)})$ we will have $T(a) \in [a, \frac{1}{d-1}) \subseteq [a, a + \frac{1}{d})$. Thus in the above lemma we can take $A_1 = 0$.
\end{remark}

The above lemma is highly suggestive that the set of maxima for $h$ restricted to $\Pi(d)$ should be a Cantor set. Our difficulty is two fold. First, it is not obvious that each kneading sequence in the symbol space is actually realized as the kneading sequence of a primitive major. Secondly, it is not clear that the sequences should uniquely determine primitive majors.  In fact both of these things are true, as we now prove.

To begin our construction, consider the primitive major:
\[
M_0 = \left \{ \left (0, \frac{1}{d}, \frac{2}{d}, ..., \frac{d-1}{d} \right ) \right \}.
\]
We can label the partition of $S^1$ induced by $M_0$ by $I_0 = (0, \frac{1}{d})$, $I_1 = (\frac{1}{d}, \frac{2}{d})$, and so on. Then we can consider the Cantor set $C$ defined by taking all elements of $[0, 1]$ who admit a base $d$ representation with no $0$.

Notice that for any $z \in C$, we either have $T^n(z) = \frac{k}{d}$ for some $n, k \in \mathbb{N}$ or else the kneading sequence of $z$ with respect to $M_0$ is well defined and $\Sigma_{M_0}(z)$ contains no $0$. In fact, the kneading sequence is the base $d$ representation.

Now consider:
\[
C' = \{ \frac{z}{d^2} \mid z \in C \}.
\]

Since $C$ is a Cantor set it follows that $C'$ is also.

\begin{theorem}
\label{B1}
If $z \in C'$ then $h(M_z) = \log(2)$.
\end{theorem}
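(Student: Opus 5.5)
The plan is to show directly that, for $z \in C'$, the kneading sequence $\Sigma_{M_z}(z)$ has the shape required in Lemma \ref{cantorkneading}, with $A_1 = 0$ as in the remark. Concretely, we need two facts. First, $T(z)$ lies in the component $J_0 = (z, z+\frac{1}{d})$ of $S^1 \setminus \partial M_z$. Second, every later iterate $T^n(z)$, $n \geqslant 2$, lies in the complementary arc $(z + \frac{1}{d}, z+1)$ and is never an endpoint $z + \frac{k}{d}$. Given these, the converse direction of Lemma \ref{cantorkneading} shows that $(T(z), T^2(z))$ is totally separated, and Theorem \ref{totsepthm} gives $h(M_z) = \log(2)$.

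Write $z = c/d^2$ with $c \in C$. Since the base $d$ digits of $c$ are all nonzero, $c \in [\frac{1}{d-1}, 1]$, and $C$ is forward invariant under $T$ because $T$ shifts digits. Hence $T(z) = c/d \in [\frac{1}{d(d-1)}, \frac{1}{d}]$ and $T^{n}(z) = T^{n-2}(c) \in C \subseteq [\frac{1}{d-1}, 1]$ for $n \geqslant 2$. We also have $0 < z \leqslant \frac{1}{d^2}$. The first fact then reduces to the inequalities $z \leqslant \frac{1}{d^2} < \frac{1}{d(d-1)} \leqslant T(z) \leqslant \frac1d < z + \frac1d$. The second reduces to $z + \frac1d \leqslant \frac{d+1}{d^2} < \frac{1}{d-1}$, which is equivalent to $d^2 - 1 < d^2$. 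These are routine checks. There is one minor point: the value $1 \equiv 0$ (reached when $c$ has all digits $d-1$) still lies in the correct arc, since $z > 0$.

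The step I expect to need care is ruling out that some iterate lands exactly on a vertex of the leaf. In that case the kneading sequence is undefined and the pair is not separated, so the Thurston graph argument breaks down. For $n = 1$ this follows from the strict inequalities above. For $n \geqslant 2$, suppose $T^n(z) = z + \frac{k}{d}$. Then $T^{n+1}(z) = T(z) = c/d$. The left side lies in $C$, so it would suffice to show $c/d \notin C$. Any base $d$ expansion of a number in $[0, \frac1d]$ other than the single point $\frac1d$ has first digit $0$. The point $\frac1d$ itself has only the expansions $0.1000\ldots$ and $0.0(d-1)(d-1)\ldots$, both of which contain a $0$. Hence $c/d \notin C$, a contradiction, so no iterate hits $\partial M_z$.

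With these facts in hand, I would redo the converse argument of Lemma \ref{cantorkneading} explicitly. The vertex $(T(z), T^n(z))$, $n \geqslant 2$, is separated by the unique leaf, so its only outgoing edges go to $(T(z), T^2(z))$ and $(T(z), T^{n+1}(z))$. By the two facts, every vertex reachable from $(T(z), T^2(z))$ is again of this form and is separated. This is exactly total separation, and Lemma \ref{totsep2} (or Theorem \ref{totsepthm}) finishes the proof.
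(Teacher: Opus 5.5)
Your proposal is correct and follows essentially the same route as the paper. Both arguments check that $\Sigma_{M_z}(z)$ is well defined and lies in $\{0\}\times\{1,\dots,d-1\}^{\mathbb{N}}$ (first iterate in $J_0$, no later iterate in $J_0$ or on $\partial M_z$), then invoke the converse direction of Lemma \ref{cantorkneading}. The only difference is cosmetic: you use the bound $\min C=\frac{1}{d-1}>z+\frac1d$ directly, while the paper obtains the same contradictions by pushing one more iterate into $(0,\frac1d]$.
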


\begin{proof}
Fix $z \in C'$. First we show that $\Sigma_{M_z}(z)$ is well defined. Indeed, notice that any element in $(\frac{k}{d}, \frac{k}{d} + \frac{1}{d^2}]$ maps to $(0, \frac{1}{d}]$ under iteration by $T$. Since $T^n(z)$ lives in $(\frac{1}{d}, 1) \cup \{ 0 \}$ for all $n \geqslant 2$, we have that no iterate of $z$ lies in $(\frac{k}{d}, \frac{k}{d} + \frac{1}{d^2}]$. On the other hand, $0 < z \leqslant \frac{1}{d^2}$ so the endpoints of the leaf of $M_z$ are all in the intervals $(\frac{k}{d}, \frac{k}{d} + \frac{1}{d^2}]$ for some $k$. Thus no iterate of $z$ can land on the leaf of $M_z$ and so $\Sigma_{M_z}(z)$ is well defined.

Recall that, with respect to $M_0$, the $j^{th}$ partition is of the form $I_j = (\frac{j}{d}, \frac{j+1}{d})$ while with respect to $M_z$ it is $J_j = (\frac{j}{d} + z, \frac{j+1}{d} + z)$. Since $z \leqslant \frac{1}{d^2}$, we have $T(z) \in [0, \frac{1}{d}]$. Clearly $T(z) > z$ and so $T(z) \in (z, \frac{1}{d} + z)$ making the first element of $\Sigma_{M_z}(z)$ a $0$. By Lemma \ref{cantorkneading}, to show $M_z$ has maximal entropy it suffices to check that all further iterates of $z$ never lie in the first partition $J_0 = (z, \frac{1}{d} + z)$.

By definition of $C$, we know the future iterates all live in $(\frac{1}{d}, 1]$ so the only possibility to rule out is that for some $n  \geqslant 2$, $T^n(z) \in (\frac{1}{d}, \frac{1}{d}+z)$ but this cannot happen as it would imply that $T^{n+1}(z) \in (0, dz] \subseteq (0, \frac{1}{d}]$ which we have already shown this does not happen. 

Thus $\Sigma_{M_z}(z) \in \{ 0 \} \times \{ 1, 2, ..., d-1\}^{\mathbb{N}}$ proving the claim.
\end{proof}

\begin{theorem}
\label{B2}
If $z \in [0, \frac{1}{d(d-1)}]$ and $h(M_z) = \log(2)$ then $z \in C'$.
\end{theorem}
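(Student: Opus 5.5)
The plan is to turn the maximality condition of Lemma \ref{cantorkneading} into a statement about where the iterates $T^n(z)$ may lie on $[0,1)$, and then read that statement off as a base $d$ digit condition on $d^2z$.

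\emph{Step 1: translate maximality.} We have $h(M_z)=\log(2)$, so $M_z$ is totally separated by Lemma \ref{totsep1}. By Proposition \ref{definedkneading} no iterate of $z$ lands on the leaf of $M_z$. In particular $z\neq 0$, since $M_0$ has $T(0)=0$ on its own leaf, so $z\in(0,\frac{1}{d(d-1)}]$. By Lemma \ref{cantorkneading} and the Remark after it, $\Sigma_{M_z}(z)=(0,A_2,A_3,\dots)$ with every $A_j\neq 0$. Concretely, $T(z)\in J_0=(z,z+\frac1d)$, and for every $n\geqslant 2$ the iterate $T^n(z)$ avoids the closed interval $[z,z+\frac1d]$; here the endpoints are excluded because landing there would put an iterate on the leaf.

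\emph{Step 2: the escape lemma.} The key observation is that no iterate $T^n(z)$ with $n\geqslant2$ can lie in $(0,z)$.
\begin{itemize}
\item Suppose $w=T^n(z)\in(0,z)$. While an iterate stays below $z<\frac1d$, the map $T$ just multiplies it by $d$.
\item So the orbit of $w$ grows geometrically until the first index $m$ with $d^mw\geqslant z$.
\item At that index $d^mw<dz$, because the previous iterate $d^{m-1}w$ was still below $z$.
\item The hypothesis $z\leqslant\frac{1}{d(d-1)}$ is exactly $dz\leqslant z+\frac1d$.
\item Hence $T^{n+m}(z)\in[z,z+\frac1d)$, contradicting Step 1.
\end{itemize}
Combining this with Step 1, every $T^n(z)$ with $n\geqslant2$ lies in $[z+\frac1d,1)\cup\{0\}$.

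\emph{Step 3: pin down $T^2(z)$.} Next we check that $z\leqslant\frac1{d^2}$, so that $T^2(z)=d^2z$ exactly. Since $dz\leqslant\frac1{d-1}<\frac2d$, the only alternative is $dz\in(\frac1d,\frac1{d-1}]$. In that case $T^2(z)=d^2z-1\in(0,\frac{1}{d-1}]$. The bound $z\leqslant \frac{1}{d(d-1)}$ gives $\frac1{d-1}\leqslant z+\frac1d$, with equality only in the boundary case. So $T^2(z)$ falls into $(0,z)\cup[z,z+\frac1d]$, which Steps 1 and 2 forbid.

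\emph{Step 4: read off the digits.} Put $w=d^2z\in(0,1]$. Step 2 gives $T^k(w)\notin(0,\frac1d)$ for all $k\geqslant0$: each iterate is either $0$ or lies in $[\frac1d,1)$, with $w=1$ possible only at $k=0$. If $T^k(w)\neq0$ for all $k$, the base $d$ digits of $w$ are the integer parts of $dT^k(w)$, and all of them are nonzero. If some $T^k(w)=0$, then $w=j/d^m$ and every earlier iterate lies in $[\frac1d,1)$, so its finite expansion has nonzero digits. The case $w=1$ uses the expansion $0.(d-1)(d-1)\cdots$. Either way $w\in C$, and hence $z=w/d^2\in C'$.

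The main obstacle is the boundary bookkeeping. This covers the equality case $z=\frac1{d(d-1)}$, iterates hitting $0$ or the points $\frac kd$, and the choice of base $d$ representation in the definition of $C$. The core inequality argument of Step 2 is robust; the care goes into arguing that every boundary coincidence either puts an iterate on the leaf or yields an admissible expansion.
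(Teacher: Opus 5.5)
Your route is the paper's: Lemma \ref{cantorkneading} constrains the orbit, an escape argument excludes $(0,z)$, then you show $z\leqslant\frac1{d^2}$ so that $z=T^2(z)/d^2$, and finally read off $T^2(z)\in C$. Your Step 2 is actually cleaner than the paper's. You use $dz\leqslant z+\frac1d$ directly and dispose of the endpoint $z$ via the leaf, whereas the paper writes the false chain $dz\leqslant\frac1{d-1}<z+\frac1d$.

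\textbf{Step 3 fails as written.} The inequality $\frac1{d-1}\leqslant z+\frac1d$ is backwards. Since $\frac1{d-1}=\frac1d+\frac1{d(d-1)}$, the hypothesis $z\leqslant\frac1{d(d-1)}$ gives $z+\frac1d\leqslant\frac1{d-1}$. So knowing only $T^2(z)\in(0,\frac1{d-1}]$ does not place $T^2(z)$ in the forbidden set $(0,z)\cup[z,z+\frac1d]=(0,z+\frac1d]$. The paper's proof makes the same slip when it asserts $(z,\frac1{d-1})\subseteq(z,z+\frac1d)$. The repair is to use the exact value $T^2(z)=d^2z-1>0$:
\[
T^2(z)-\Bigl(z+\frac1d\Bigr)=(d^2-1)z-\frac{d+1}{d}=(d+1)\Bigl((d-1)z-\frac1d\Bigr)\leqslant 0 .
\]
This holds precisely because $z\leqslant\frac1{d(d-1)}$. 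Hence $T^2(z)\in(0,z+\frac1d]$, which Steps 1 and 2 forbid.

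\textbf{Step 4 has a boundary leak in the terminating case.} Membership in $C$ requires a base $d$ expansion with no zero digit at all. A terminating expansion $0.a_1\cdots a_m$ must therefore be rewritten as $0.a_1\cdots a_{m-1}(a_m-1)\overline{(d-1)}$. This needs $a_m\geqslant 2$, i.e.\ the last nonzero iterate must not equal $\frac1d$.

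Your weakened statement $T^k(w)\notin(0,\frac1d)$ does not exclude this case. For example, $w=\frac1d$ passes your test, yet $\frac1d\notin C$, since $C\subseteq[\frac1{d-1},1]$.

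What Steps 1 and 2 actually give is $T^n(z)\in(z+\frac1d,1)\cup\{0\}$ for $n\geqslant 2$. Since $z>0$, the point $\frac1d$ lies in $J_0$ and is excluded, so $a_m\geqslant 2$. State Step 4 from this stronger fact and the argument closes.
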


\begin{proof}
Since $h(M_z) = \log(2)$, Lemma \ref{cantorkneading} gives that $\Sigma_{M_z}(z) \in \{ 0 \} \times \{ 1, 2, ..., d-1 \}^{\mathbb{N}}$. Thus we know that $T(z) \in (z, z+\frac{1}{d})$ and that for all $n \geqslant 2$, 
\[
T^n(z) \in [z + \frac{1}{d}, 1) \cup [0, z] \subseteq [\frac{1}{d}, 1) \cup [0, z].
\]
We claim that $T^n(z) \notin (0, z]$ for all $n \geqslant 2$. Suppose that for some $n$ this were true. Then there would be a $r$ such that:
\[
T^{n+r}(z) > z, \quad T^{n+r-1}(z) \leqslant z.
\]

But since $z \leqslant \frac{1}{d(d-1)}$ we have $dz \leqslant \frac{1}{d-1} < z + \frac{1}{d}$ so we would have $T^{n+r}(z) \in (z, z + \frac{1}{d})$ contradicting the kneading sequence of $z$.

Thus we have $T^n(z) \in [\frac{1}{d}, 1) \cup \{ 0 \}$ for all $n \geqslant 2$. By definition, this gives that $T^2(z) \in C$ and hence that $\frac{T^2(z)}{d^2} \in C'$.

Lastly, we show that $\frac{T^2(z)}{d^2} = z$. Since both are preimages of $T^2(z)$ by $T^{-2}$, and the left is the unique preimage in $(0, \frac{1}{d^2}]$, it suffices to check that $z \in (0, \frac{1}{d^2}]$.

If this were not true then we would have $\frac{1}{d^2} < z \leqslant \frac{1}{d(d-1)}$. This implies $\frac{1}{d} < T(z) < \frac{1}{d-1}$. Thus we would have that $1 < d^2z < \frac{d}{d-1}$ and hence $0 < T^2(z) < \frac{d}{d-1} - 1 = \frac{1}{d-1}$. Thus we either have $T^2(z) \in (0, z]$ or that $T^2(z) \in (z, \frac{1}{d-1}) \subseteq (z, z + \frac{1}{d})$. The first case was disproven above while the second contradicts the kneading sequence of $z$.
\end{proof}

Combining Theorems \ref{B1} and \ref{B2} we obtain Theorem B.

\section{Other Strata}

Consider now an arbitrary stratum $\Pi(s_1, s_2, ..., s_r)$ of $PM(d)$. Since our global maxima lie in $\Pi(1, 1, ..., 1)$ and we already classified $\Pi(d)$ we omit these strata. We begin with a conjectural description of the maxima.

\begin{conjecture}
Given a stratum $\Pi(s_1, ..., s_r)$ of $PM(d)$ other then $\Pi(d)$ and $\Pi(1, 1, ..., 1)$, the set of primitive majors $M$ with $h(M) = \log(r+1)$ is:
\begin{enumerate}
\item A Cantor set if $s_j > 1$ for all $j$.
\item A Cantor set union a finite number of isolated points otherwise.
\end{enumerate}
\end{conjecture}

To motivate this conjecture we consider the space of possible kneading sequences for the leaves of primitive majors with $h(m) = \log(r+1)$ in $\Pi(s_1, ..., s_r)$. Notice that, for $M = \{ L_1, ..., L_r \} \in \Pi(s_1, ..., s_r)$ to have $h(M) = \log(r+1)$ it must be the case that it is possible for two points on the unit circle to separate each leaf $L_1, L_2, ..., L_r$. After possibly relabeling the leaves, the separation vector of any such pair will either be $(L_1, ..., L_r)$ or $(L_r, ..., L_1)$. In this way we can denote $L_1$ and $L_r$ as the outer leaves of $M$. 

Applying the same approach as in sections 4, on a case by case basis we can describe the kneading sequences of each leaf $L_j$. Interestingly the analysis depends only on the outer leaves $L_1$ and $L_r$. If $L_1$ and $L_r$ are both ideal lines then, as in section 4, there will be either one or two possible kneading sequences for each leaf (depending on if $r$ is even or odd). When both of the end leaves are not an ideal line, there will be a (symbolic) Cantor set of possible kneading sequences for each leaf. In the special case where one end leaf is a line and the other is not then there will always be a Cantor set of maximal kneading sequences but when $r$ is assumed to be odd there will also be isolated sequences.

Indeed, consider such a primitive major $M = \{ L_1, L_2, ..., L_r \}$ with the leaves ordered so that $L_1$ and $L_r$ are the outer leaves of degree $s_1$ and $s_r$ respectively. Then there are intervals $I^1_1, I^1_2, ..., I^1_{s_1}$ and $I^2_1, I^2_2, ..., I^2_{s_r}$ of the unit circle minus the endpoints of each leaf $L_j$ such that (1) each $I^i_k$ is bounded by $L_1$ and $L_r$ respectively, and (2) any pair of points in the product $I^1_k \times I^2_j$ has separation vector $(L_1, L_2, ..., L_r)$. Then we can define new intervals $I_A = \cup_k I^1_k$ and $I_B = \cup_k I^2_k$. Then relative to these two new intervals, the proofs of Propositions \ref{evenprop} and \ref{oddprop} both apply (depending on if $r$ is even or odd). This give the modified kneading sequence of each leaf $L_j$ (using symbols $A$ and $B$) from which we can recover the kneading sequence by replacing being in $I_A$ by being in any choice of $I^1_k$ and being in $I_B$ with being in any $I^2_j$.

Unlike in the global case or the unicritical case we cannot hope that these kneading sequences uniquely correspond to primitive majors. Indeed, since we assume nothing about $L_2, L_3, ..., L_{r-1}$ they could be chosen in any order. Moreover, in the stratum $\Pi(s_1, s_2, ..., s_r)$ we can choose the degree of the end leaves however we want from $s_1, ..., s_r$ thus giving several choices. In fact, it is not even clear that each kneading sequence would correspond to a primitive majors kneading sequence.

\begin{figure}
\begin{tikzpicture}[scale=3]

  % 1. Boundary Circle
  \draw[thick, gray!40] (0,0) circle (1);

  % 2. Define Coordinates for Leaf 1 (1/10 and 9/10)
  \path (9/10*360:1) coordinate (L1_A)
        (1/10*360:1) coordinate (L1_B);

  % 3. Define Coordinates for Leaf 3 (4/10 to 6/10)
  \path (4/10*360:1) coordinate (L3_A)
        (6/10*360:1) coordinate (L3_B);

  % 4. Define Coordinates for Leaf 2 - Ideal Triangle based on denominator 9
  \path (1/9*360:1)       coordinate (T_A)
        ({(1/9+1/5)*360}:1) coordinate (T_B)
        ({(1/9+3/5)*360}:1) coordinate (T_C);

  % 5. Draw Leaf 1 (diff = 72° -> bend = 54) -> Mapped to arc inside the circle
  \draw[thick, blue] (L1_A) to [bend left=54] node[pos=0.5, auto, black, scale=0.9] {$L_1$} (L1_B);

  % 6. Draw Leaf 3 (diff = 72° -> bend = 54) -> L_3
  \draw[thick, blue] (L3_A) to [bend left=54] node[pos=0.5, auto, black, scale=0.9] {$L_3$} (L3_B);

  % 7. Draw Leaf 2 - Ideal Triangle -> L_2
  \draw[thick, blue] (T_A) to [bend left=54] (T_B);
  \draw[thick, blue] (T_B) to [bend left=18] (T_C);
  \draw[thick, blue] (T_C) to [bend left=18] (T_A);
  
  % Centered label for the entire Triangle Leaf L2
  \node[black, scale=0.9] at (barycentric cs:T_A=1,T_B=1,T_C=1) {$L_2$};

  % 8. Draw Boundary Regions/Intervals on the Unit Circle
  % Region I_1: minor arc between 9/10 (324°) and 1/10 (36°)
  \draw[very thick, black] (324:1.05) arc (324:396:1.05);
  \node[black, scale=1.1] at (0:1.15) {$I_1$};

  % Region I_2: minor arc between 4/10 (144°) and 6/10 (216°)
  \draw[very thick, black] (144:1.05) arc (144:216:1.05);
  \node[black, scale=1.1] at (180:1.15) {$I_2$};

\end{tikzpicture}
\caption{A primitive major in $\Pi(1, 1, 2) \subseteq PM(5)$. For majors of this form (that is, lying in the same connected component of $\Pi(1, 1, 2)$), there are finitely many kneading sequences the major could have if ithas entropy $\log(4)$, the maximum possible for this stratum.}
\label{lam3}
\end{figure}

\begin{figure}
\begin{tikzpicture}[scale=3]

  % 1. Boundary Circle
  \draw[thick, gray!40] (0,0) circle (1);

  % 2. Define Coordinates for Leaf 1 (Ideal Triangle: 1/10, 3/10, 9/10)
  \path (36:1)   coordinate (T1_A)   % 1/10
        (108:1)  coordinate (T1_B)   % 3/10
        (324:1)  coordinate (T1_C);  % 9/10

  % 3. Define Coordinates for Leaf 2 (7/20 to 15/20)
  \path (126:1)  coordinate (L2_A)
        (270:1)  coordinate (L2_B);

  % 4. Define Coordinates for Leaf 3 (4/10 to 6/10)
  \path (144:1)  coordinate (L3_A)
        (216:1)  coordinate (L3_B);

  % 5. Draw Leaf 1 - Perfect Ideal Triangle -> L_1
  \draw[thick, blue] (T1_A) to [bend left=54] (T1_B);
  \draw[thick, blue] (T1_B) to [bend right=18] (T1_C);
  \draw[thick, blue] (T1_C) to [bend left=54] (T1_A);
  
  % Centered label for the entire Triangle Leaf L1
  \node[black, scale=0.9] at (barycentric cs:T1_A=1,T1_B=1,T1_C=1) {$L_1$};

  % 6. Draw Leaf 2 (diff = 144° -> bend left = 18) -> L_2
  \draw[thick, blue] (L2_A) to [bend left=18] node[pos=0.5, auto, black, scale=0.9] {$L_2$} (L2_B);

  % 7. Draw Leaf 3 (diff = 72° -> bend left = 54) -> L_3
  \draw[thick, blue] (L3_A) to [bend left=54] node[pos=0.5, auto, black, scale=0.9] {$L_3$} (L3_B);

  % 8. Outer Interval Labels - Inner Layer (Radius 1.10)
  % Interval I_1 = I_A: (4/10 to 6/10) -> 144° to 216°
  \draw[very thick, black] (144:1.10) arc (144:216:1.10);
  \node[black, scale=1.0, fill=white, inner sep=2pt] at (180:1.10) {$I_1 = I_A$};

  % Interval I_2: (1/10 to 3/10) -> 36° to 108°
  \draw[very thick, black] (36:1.10) arc (36:108:1.10);
  \node[black, scale=1.0, fill=white, inner sep=2pt] at (72:1.10) {$I_2$};

  % Interval I_3: (9/10 to 1/10) -> 324° to 396°
  \draw[very thick, black] (324:1.10) arc (324:396:1.10);
  \node[black, scale=1.0, fill=white, inner sep=2pt] at (0:1.10) {$I_3$};

  % 9. Outer Interval Labels - Outer Layer (Radius 1.30)
  % Interval I_B: (9/10 to 3/10) -> 324° to 468°
  \draw[very thick, black] (324:1.30) arc (324:468:1.30);
  \node[black, scale=1.0, fill=white, inner sep=2pt] at (36:1.30) {$I_B$};

  % 10. Hollow Separation Dot between I_2 and I_3 at exactly 1/10 (36°)
  \draw[thick, fill=white] (36:1.10) circle (0.018);

\end{tikzpicture}
\caption{A primitive major in $\Pi(1, 1, 2) \subseteq PM(5)$.For majors of this form (that is, lying in the same connected component of $\Pi(1, 1, 2)$), the possible kneading sequences the major could have is a Cantor set union finitely many sequences if it has entropy $\log(4)$, the maximum possible for this stratum.}
\label{lam4}
\end{figure}

As an example, consider figures \ref{lam3} and \ref{lam4}. Both of these belong to $\Pi(1, 1, 2)$ and yet the possible kneading sequences for laminations of their forms with entropy $\log(4)$ is quite different. For figure \ref{lam3} there are two intervals which separate all three vectors, $I_1$ and $I_2$. Any pair in $I_1 \times I_2$ has separation vector $(L_1, L_2, L_3)$ hence $T(L_1)$ and $T(L_2)$ are separated, and $T(L_2)$ and $T(L_3)$ are separated. Thus $T(L_1)$ and $T(L_3)$ lie in the same interval, say $I_1$. Any totally separated pair $(x, y) \in I_1 \times I_2$ would have edges to $(T(x), T(L_1))$ and $(T(y), T(L_3))$. Since $T(L_1)$ and $T(L_3)$ lie in $I_1$ this means $T(x), T(y)$ are in $I_2$ for any such pair $(x, y)$. Thus if $x$ belongs to a totally separated vertex $(x, y)$ then its kneading sequence is either $(\overline{2})$ or $(1, \overline{2})$. $T(L_2)$ is the former case while both $T(L_1)$ and $T(L_3)$ are the latter. Since the choice of saying $T(L_1) \in I_1$ was arbitrary it is also possible the kneading sequences could be $(\overline{1})$ and $(2, \overline{1})$.

Now let us consider figure \ref{lam4}. In this case pairs in $I_1 \times I_2$ and $I_1 \times I_3$ have separation vector $(L_3, L_2, L_1)$. If we define $I_A$ as $I_1$ and $I_B$ as $I_2 \cup I_3$ then we can use the symbols $A, B$ for kneading sequences and have that $(x, y)$ has separation vector $(L_3, L_2, L_1)$ when $(x, y) \in I_A \times I_B$. Repeating the same reasoning as before we find that any totally separated pair $(x, y)$ will have kneading sequences $(\overline{A})$ and $(B \overline{A})$ or else $(\overline{B})$ and $(A, \overline{B})$, so in particular if this lamination were to maximize entropy then these are the possible kneading sequences of the leaves. But now when we replace the symbol $A$ with $1$ and $B$ with any choice of $2$ and $3$ we find that in the first case there are two types of maximal kneading sequences, either using $(\overline{1})$ and $(2, \overline{1})$, or to use $(\overline{1})$ and $(3, \overline{1})$. The second case however turns into a symbolic Cantor set with possible kneading sequences of the form $\{2, 3\}^{\mathbb{N}}$ and $\{ 1 \} \times \{2, 3 \}^{\mathbb{N}}$.


\begin{thebibliography}{99}

\bibitem[AF]{AF}  L. Alsed\`a, N. Fagella, \textit{Dynamics on Hubbard Trees}, Fund. Math. \textbf{164} (2000), no. 2, 115-141.

\bibitem[DS]{DS} D. Dudko, D. Schleicher, \textit{Core Entropy of Quadratic Polynomials}. Arnold Math J. \textbf{6}, 333–385 (2020).

\bibitem[Ga]{Ga} Y. Gao, \textit{On Thurston's Core Entropy Algorithm}, Trans. Amer. Math. Soc. \textbf{373} (2020), 747-776.

\bibitem[GT]{GT} Y. Gao, G. Tiozzo, \textit{The Core Entropy for Polynomials of Higher Degree}, J. Eur. Math. Soc. \textbf{24} (2022), no. 7, pp. 2555–2603.

\bibitem[Ju]{Ju} W. Jung, \textit{Core entropy and biaccessibility of quadratic polynomials}, available at arXiv:1401.4792[math.DS].

\bibitem[Li]{Li} T. Li, \textit{A monotonicity conjecture for the entropy of Hubbard trees}, PhD thesis, SUNY Stony Brook, 2007.

\bibitem[Po]{Po} A. Poirier, \textit{Critical Portraits for Postcritically Finite Polynomials}, Fund. Math. \textbf{203} (2009), no. 2, 107-163.

\bibitem[Ti1]{Ti1} G. Tiozzo, \textit{Topological entropy of quadratic polynomials and dimension of sections of the Mandelbrot set}, Adv. Math. \textbf{273} (2015), 651–715.

\bibitem[Ti2]{Ti2} G. Tiozzo, \textit{Continuity of Core Entropy of Quadratic Polynomials}, Invent. Math. \textbf{203} (2016), no. 3, 891–921.

\bibitem[Th+]{Th+} W. Thurston, H. Baik, Y. Gao, J. Hubbard, K. Lindsey, L. Tan, D. \textit{Thurston, Degree-d invariant
laminations}, preprint.

\end{thebibliography}
\end{document}